\theoremstyle{plain}
\newtheorem{Thm}{Theorem}
\newtheorem{Cor}[Thm]{Corollary}
\newtheorem{Lem}{Lemma}%[section]
\theoremstyle{definition}
\newtheorem{Rem}{Remark}
\newcommand{\N}{\mathbb{N}}
\newcommand{\Z}{\mathbb{Z}}
\newcommand{\R}{\mathbb{R}}
\newcommand{\C}{\mathbb{C}}
\newcommand{\calK}{\mathcal{K}}
\newcommand{\calS}{\mathcal{S}}
\newcommand{\calF}{\mathcal{F}}
\newcommand{\calI}{\mathcal{I}}
\newcommand{\calJ}{\mathcal{J}}
\newcommand{\calP}{\mathcal{P}}
\newcommand{\calQ}{\mathcal{Q}}
\begin{document}
%------------------------------

%\date{June 23, 2011.}

\renewcommand\datename{\textbf{THIS PAPER HAS BEEN PUBLISHED IN}}
\date{\textit{Expo.\@ Math.\@} \textbf{30} (2012), 32--48.} 
% Expositiones Mathematicae
% ISSN: 0723-0869 

\title[Bilinear biorthogonal expansions]{%
Bilinear biorthogonal expansions and the Dunkl kernel on the real line}

\author[L. D. Abreu]{Lu\'{\i}s Daniel Abreu}
\address{CMUC and Departamento de Matem\'atica,
Universidade de Coimbra,
Faculdade de Ci\^{e}ncias e Tecnologia (FCTUC),
3001-454 Coimbra, Portugal}
\email{daniel@mat.uc.pt}
\thanks{Research of the first author supported by CMUC/FCT and FCT post-doctoral grant SFRH/BPD/26078/2005, POCI 2010 and FSE}

\author[\'O. Ciaurri]{\'Oscar Ciaurri}
\address{CIME and Departamento de Matem\'aticas y Computaci\'on,
Universidad de La Rioja, % Edificio J.~L.~Vives,
% Calle Luis de Ulloa s/n, 
26004 Logro\~no, Spain}
\email{oscar.ciaurri@unirioja.es}

\author[J. L. Varona]{Juan Luis Varona}
\address{CIME and Departamento de Matem\'aticas y Computaci\'on,
Universidad de La Rioja, % Edificio J.~L.~Vives,
% Calle Luis de Ulloa s/n, 
26004 Logro\~no, Spain}
\email{jvarona@unirioja.es}
\urladdr{http://www.unirioja.es/cu/jvarona/}
\thanks{Research of the second and third authors supported by grant MTM2009-12740-C03-03 of the DGI}

\keywords{Bilinear expansion, biorthogonal expansion, plane wave expansion, 
sampling theorem, Fourier-Neumann expansion, Dunkl transform, special functions.}
\subjclass[2000]{Primary 94A20; Secondary 42A38, 42C10, 33D45}

\begin{abstract}
We study an extension of the classical Paley-Wiener space structure, which is based on bilinear expansions of integral kernels into biorthogonal sequences of functions. The structure includes both sampling expansions and Fourier-Neumann type series as special cases, and it also provides a bilinear expansion for the Dunkl kernel (in the rank $1$ case) which is a Dunkl analogue of Gegenbauer's expansion of the plane wave and the corresponding sampling expansions. In fact, we show how to derive sampling and Fourier-Neumann type expansions from the results related to the bilinear expansion for the Dunkl kernel.
\end{abstract}

%------------------------------
\maketitle
%------------------------------

%% ONLY FOR DRAFTS:
%\tableofcontents

%------------------------------
\section{Introduction}
%------------------------------

The function $K(x,t) = e^{ixt}$ has several well-known bilinear expansion
formulas, for example: the Fourier series expansion,
\begin{equation}
\label{eq:expsamp}
  e^{ixt} = \sum_{n = -\infty}^{\infty}
  \frac{\sin (x-\pi n)}{x-\pi n} \,e^{i\pi nt},
  \qquad t \in [-1,1];
\end{equation}
the expansion in terms of the prolate spheroidal wavefunctions~$\varphi_{n}$,
\begin{equation}
\label{eq:prolate}
  e^{-ixt} = \sqrt{2\pi}\sum_{n = 0}^{\infty}i^{n}
  \lambda_{n}\varphi_{n}(x)\varphi_{n}(t),
\end{equation}
where $\lambda_{n}$ are the square roots of the eigenvalues arising from
the time-band limiting integral equation (see the recent paper~\cite{Zayed});
and Gegenbauer's expansion of the plane wave in Gegenbauer polynomials 
and Bessel functions (see~\cite[\S\,4.8, formula (4.8.3), p.~116]{Ism})
\begin{equation}
\label{eq:geg}
  e^{ixt} = \Gamma(\beta) \left(\frac{x}{2}\right)^{-\beta}
  \sum_{n = 0}^{\infty} i^{n}(\beta+n)J_{\beta+n}(x)C_{n}^{\beta}(t),
  \qquad t \in [-1,1],
\end{equation}
(in the particular case $\beta=0$, this formula is the so-called
Jacobi-Anger identity). Here and in what follows in this paper we 
are using $C_{n}^{\beta}$ to denote the Gegenbauer polynomials 
of order~$\beta$ and $J_\nu$ to denote the Bessel functions of 
order~$\nu$.

Each of the above expansions is associated with important developments in
mathematical analysis. The first one is equivalent to the
Whittaker-Shannon-Kotel'nikov sampling theorem (see \cite[Ch.~2]{Zayed-book}), 
the second one is the prototype of a Mercer kernel \cite{Zayed}, 
and the third one has been the main tool in the diagonalization of 
certain integral operators~\cite{IZ}. 

Our main interest is in expansions of the type~\eqref{eq:geg}. To see why biorthogonality is required, recall the definition of the Paley-Wiener space~$PW$,
\begin{equation*}
  PW = \left\{f\in L^2(\R): f(z) = (2\pi)^{-1/2}\int_{-1}^1 u(t)e^{izt}\, dt ,\ u\in L^2(-1,1)\right\}.
\end{equation*}
We immediately obtain orthogonal expansions for $f\in PW$ by simply integrating equations \eqref{eq:expsamp} and \eqref{eq:prolate}, by using the orthogonality of the exponentials and the prolate spheroidal functions. However, if we try to do the same thing in \eqref{eq:geg}, we must restrict ourselves to the case $\beta=1/2$, when the weight function of the Gegenbauer's polynomials (actually Legendre) is~$1$. Therefore, even in this simple case it is not clear how to expand Paley-Wiener functions into Gegenbauer polynomials or Bessel functions with general parameter~$\beta$, and the biorthogonal formulation serve to introduce the parameter $\beta$ of~\eqref{eq:geg} in a natural way.

To organize the presentation of our ideas, we first construct a structure involving biorthogonal expansions, from which the results are obtained, after explicit evaluation of some integrals. In particular (but not exclusively), we use this structure to analyze the solution of the above mentioned expansion problem and its extension to the Dunkl kernel
\begin{equation*}
  E_{\alpha}(ix) = 2^{\alpha}\Gamma(\alpha+1)
  \left( \frac{J_{\alpha}(x)}{x^{\alpha}}+\frac{J_{\alpha+1}(x)}
  {x^{\alpha+1}}\,xi\right)
\end{equation*}
(as we will see in subsection~\ref{sub:Dunkl}, the Dunkl kernel is used to define the Dunkl transform on the real line similarly to how the kernel $e^{ixt}$ is used to define the Fourier transform),
and so we expand functions in $PW$ and its generalization studied in \cite{CV} in terms of Fourier-Neumann series.
From the following extension of \eqref{eq:geg} to the Dunkl kernel
\begin{equation*}
%\label{eq:Dunklexpans}
  E_\alpha(ixt) = \Gamma(\alpha+\beta+1) \left(\frac{x}{2}\right)^{-\alpha-\beta-1}
  \sum_{n=0}^\infty i^n (\alpha+\beta+n+1)J_{\alpha+\beta+n+1}(x)
  C_n^{(\beta+1/2,\alpha+1/2)}(t),
\end{equation*}
where $C_n^{(\beta+1/2,\alpha+1/2)}$ are the so-called generalized Gegenbauer polynomials, 
we obtain uniformly convergent Fourier-Neumann type expansions
\begin{equation*}
  f(x) = \sum_{n=0}^\infty a_n(f)(\alpha+\beta+n+1)x^{-\alpha-\beta-1}J_{\alpha+\beta+n+1}(x),
\end{equation*}
valid for $f\in PW_\alpha$ (the natural generalization of the Paley-Wiener space as in \cite{CV}), and where
\begin{equation*}
  a_n(f) = 2^{\alpha+\beta+1} \Gamma(\alpha+\beta+1)
  \int_{\R} f(t) \frac{J_{\alpha+\beta+n+1}(t)}{t^{\alpha+\beta+1}} \,d\mu_{\alpha+\beta}(t).
\end{equation*}
Moreover, in some cases, the coefficients $a_n(f)$ are identified as Fourier coefficients.

The paper is organized as follows. In the second section we
describe our problem in abstract terms. First we build the general
setup for bilinear orthogonal expansions and, later, we modify it
to consider biorthogonal sequences in the
expansions (see Theorem~\ref{thm:expbilin}). 
In the third section we describe the results which are
obtained in the case of the Fourier kernel. 
The fourth section studies the expansion associated with the Dunkl kernel
(see Theorem~\ref{thm:Dunkl}), that we think is a new and interesting result; 
moreover, we also show its consequences for the Hankel transform.
In the last section we collect the evaluation of some integrals involving 
special functions which were essential for the paper but could not be found 
in the literature.

%\pagebreak[4]
%------------------------------
\section{Structure}
%------------------------------

%------------------------------
\subsection{Orthogonal expansions}
\label{sub:set-up}
%------------------------------

%%% ALTERNATIVA APARENTEMENTE MAS GENERAL, PERO QUE ES EQUIVALENTE
%We begin with $K(x,t)$, a function of two variables defined on $\Omega
%\times \Omega \subset \R\times \R$, and an interval $I\subset \Omega$.
%Using this function as a kernel, define on $L^{2}(\Omega,d\mu)$, 
%with $d\mu$ a non-negative real measure, an integral
%transformation by
%\begin{equation}
%\label{eq:calK}
%  (\calK f)(t) = \int_{\Omega} f(x)\overline{K(x,t)}\,d\mu(x),
%\end{equation}
%with inverse
%\begin{equation}
%\label{eq:invcalK}
%  (\widetilde{\calK}g)(x) = \int_{\Omega} g(t)K(x,t)\,d\mu(t),
%\end{equation}
%and satisfying the multiplication formula
%\begin{equation}
%\label{eq:multiplication}
%  \int_{\Omega} (\calK f) g\,d\mu = \int_{\Omega} (\calK g)f\,d\mu.
%\end{equation}
%Note that the previous identity implies the following one:
%\begin{equation*}
%  \int_{\Omega} (\widetilde{\calK} f) g\,d\mu
%  = \int_{\Omega} (\widetilde{\calK} g) f\,d\mu.
%\end{equation*}
%Moreover, if in the multiplication formula we take $g = \overline{\calK(f)}$
%and use $\calK(\overline{h}) = \overline{\widetilde{\calK}(h)}$,
%we get $\|\calK f\|_{L^2(\Omega,d\mu)} = \|f\|_{L^2(\Omega,d\mu)}$.
%That is, we guarantee that $\calK$ is an isometry.
%The most common situation is to assume that the kernel satisfies $K(x,t) = K(t,x)$.
%In this way, the multiplication formula follows from Fubini's theorem.
%%% FIN ALTERNATIVA

We begin with $K(x,t)$, a function of two variables defined on $\Omega
\times \Omega \subset \R\times \R$ satisfying
\begin{equation*}
  K(x,t)=K(t,x)
\end{equation*} 
almost everywhere for $(x,t) \in \Omega \times \Omega$, 
and an interval $I\subset \Omega$.
Using this function as a kernel, define on $L^{2}(\Omega,d\mu)$, 
with $d\mu$ a non-negative real measure, an integral
transformation by
\begin{equation}
\label{eq:calK}
  (\calK f)(t) = \int_{\Omega} f(x)\overline{K(x,t)}\,d\mu(x).
\end{equation}
Moreover, we suppose that $\calK$ is invertible and that the inverse is
\begin{equation}
\label{eq:invcalK}
  (\widetilde{\calK}g)(x) = \int_{\Omega} g(t)K(x,t)\,d\mu(t).
\end{equation}
Then, from Fubini's theorem we get the multiplication formula
\begin{equation}
\label{eq:multiplication}
  \int_{\Omega} (\calK f) g\,d\mu = \int_{\Omega} (\calK g)f\,d\mu
\end{equation}
and also
\begin{equation*}
  \int_{\Omega} (\widetilde{\calK} f) g\,d\mu
  = \int_{\Omega} (\widetilde{\calK} g) f\,d\mu.
\end{equation*}
Moreover, if in the multiplication formula we take $g = \overline{\calK(f)}$
and use $\calK(\overline{h}) = \overline{\widetilde{\calK}(h)}$,
we get $\|\calK f\|_{L^2(\Omega,d\mu)} = \|f\|_{L^2(\Omega,d\mu)}$.
% That is, we guarantee that $\calK$ is an isometry.

As usual, it is enough to suppose that the operators
$\calK$ and $\widetilde{\calK}$,
defined by~\eqref{eq:calK} and~\eqref{eq:invcalK}, are
defined on a suitable dense subset of $L^2(\Omega,d\mu)$, and later
extended to the whole $L^2(\Omega,d\mu)$ in the standard way.
Moreover, let us also assume that, as a function of~$t$,
$K(x,t)\chi_I(t)$ belongs to $L^{2}(\Omega,d\mu)$
(or, in other words, $K(x,\cdot) \in L^{2}(I,d\mu)$).
Here, $\chi_{I}$ stands for the characteristic function of~$I$.

Now, let $N$ be a subset of the integers, $\{\phi_{n}\}_{n\in N}$ be
an orthonormal basis of the space $L^{2}(I,d\mu)$ and $\{S_{n}\}_{n\in N}$ be 
a sequence of functions in $L^{2}(\Omega,d\mu)$ such that, for every $n\in N$,
\begin{equation}
\label{eq:calKSn}
  (\calK S_{n})(t) = \chi_{I}(t)\overline{\phi_{n}(t)}
\end{equation}
(notice the small abuse of notation in the use of $\chi_{I}\overline{\phi_{n}}$;
here, $\overline{\phi_{n}}$ is a function that is defined only on $I$, and by
$\chi_{I}\overline{\phi_{n}}$ we mean that we extend this function to $\Omega$ by
having it be the null function on $\Omega \setminus I$; we will use this kind
of notation often in this paper). Consider
the subspace $\calP$ of $L^{2}(\Omega,d\mu)$ constituted by those
functions $f$ such that $\calK f$ vanishes outside of~$I$. This can
also be written as
\begin{equation*}
  \calP = \Bigl\{ f\in L^{2}(\Omega) :
  f(x) = \int_{I} u(t)K(x,t)\,d\mu(t),\ u\in L^{2}(I,d\mu) \Bigr\}.
\end{equation*}
On the one hand, by using that $\calK$ is an isometry,
it follows that $S_{n}$ is a \emph{complete orthonormal} sequence
in~$\calP$. This implies that every function $f$ in $\calP$
has an expansion of the form
\begin{equation}
\label{eq:fcS}
  f(x) = \sum_{n\in N} c_{n}S_{n}(x).
\end{equation}
On the other hand, from \eqref{eq:calKSn} we have
\begin{equation*}
  \widetilde{\calK}(\chi_{I}\overline{\phi_{n}})
  = \widetilde{\calK}(\calK S_{n}) = S_{n}.
\end{equation*}
Consequently, the Fourier coefficients of $K(x,t)$, as a function of $t$, in
the basis $\{\phi_{n}\}_{n\in N}$ on $L^2(I,d\mu)$ are $S_{n}(x)$.
As a result, $K(x,t)$  has the following bilinear expansion formula:
\begin{equation}
\label{eq:Kxt}
  K(x,t) = \sum_{n\in N}S_{n}(x)\phi_{n}(t),
\end{equation}
that must be understood as in $L^2(I,d\mu(t))$ for every $x \in \Omega$.

\begin{Rem}
The reader familiar with sampling theory, in particular with the
generalization due to Kramer (see~\cite{Kramer} or, also, 
\cite[Theorem~3.5]{Zayed-book}), has probably noticed strong resemblances. 
Indeed, Kramer's lemma corresponds to a particular
case of the above situation when there exists a sequence of points
$x_{k}$ such that $S_{n}(x_{k}) = \delta_{n,k}$. This implies that
$\{K(x_{n},\cdot)\} = \{\phi_{n}\}$ is an orthogonal basis of
$L^{2}(I,d\mu)$ and that $\calP$ has an orthogonal basis given by
$\{\widetilde{\calK}(\chi_I\overline{K(x_{n},\cdot)})\} =
\{\widetilde{\calK}(\chi_I\overline{\phi_{n}(\cdot)})\} =
\{S_{n}\}$. The orthogonal expansion in the basis $\{S_{n}\}_{n\in N}$ 
is the sampling theorem. In \cite{Garcia} there is given a
detailed exposition of a similar structure, which, although restricted
to sampling theory, is in its essence equivalent to the one that we 
have described.
\end{Rem}

The objects that we are interested in here are mainly those expansions that
fit in the above setup, but that are not sampling expansions. As we
will see, there exist quite a few of these. We will see in this work a
wealth of situations where explicit computations of certain integrals yield
new expansion formulas of the type~\eqref{eq:Kxt}, but in general they are special
cases of the more general setting that we will provide in the next section.

\begin{Rem}
\label{rem:unifconv}
Perhaps the most remarkable feature that this setup
inherits from sampling theory is the fact that, in many situations, uniform
convergence can be guaranteed, once we know that the expansion converges in
norm. This happens because $\calP$ is a Hilbert space with a reproducing
kernel given by
\begin{equation*}
  k(x,y) = \sum_{n\in N}S_{n}(x)\overline{S_{n}(y)}
  = \int_{I}K(x,t)\overline{K(y,t)}\,d\mu(t).
\end{equation*}
This fact can be proved using Saitoh's theory of linear transformation
in Hilbert space \cite{Sa, S-book} in a
way similar to what has been done in \cite{Abr} and also by the same
arguments in~\cite{Garcia}. The uniform convergence of the
expansions~\eqref{eq:fcS} is now a consequence of the well-known fact that if 
the sequence $f_{n}$ converges to $f$ in the norm of a Hilbert space with
reproducing kernel $k(\cdot,\cdot)$, then the convergence is pointwise to $f$
and uniform in every set where
$\left\|K(x,\cdot)\right\|_{L^{2}(I,d\mu)}$ is bounded.
\end{Rem}

%------------------------------
\subsection{Biorthogonal expansions}
\label{sub:biorthogonal}
%------------------------------

We now consider the same setup as in subsection~\ref{sub:set-up}
(in particular, the notation for the operators $\calK f$
and its inverse $\widetilde{\calK}g$ in terms of a kernel $K(x,t)$
that satisfies the multiplication formula~\eqref{eq:multiplication}),
but instead of the orthonormal
basis $\{\phi_{n}\}_{n\in N}$ of the space $L^{2}(I,d\mu)$,
we assume that we have a pair of complete biorthonormal
sequences of functions in $L^{2}(I,d\mu)$, $\{P_{n}\}_{n\in N}$
and $\{Q_{n}\}_{n\in N}$, that is,
\begin{equation*}
%\label{eq:biort}
  \int_{I}P_{n}(x)\overline{Q_{m}(x)}\,d\mu(x) = \delta_{n,m}
\end{equation*}
and every $g \in L^2(I,d\mu)$ can be written, in a unique way, as
\begin{equation*}
  g(t) = \sum_{n \in N} c_n(g) P_n(t),
  \qquad
  c_n(g) = \int_I g(t) \overline{Q_n(t)}\,d\mu(t).
\end{equation*}
Let us also define, in $L^2(\Omega,d\mu)$, the sequences of functions $\{S_n\}_{n\in N}$
and $\{T_n\}_{n\in N}$ given by
\begin{equation}
\label{eq:K-Qn}
% \calK(S_{n})(x) = \chi_{I}(x)\overline{Q_{n}(x)}.
  S_{n}(x) = \widetilde{\calK}(\chi_{I}\overline{Q_{n}})(x), \quad x \in \Omega,
\end{equation}
and
\begin{equation*}
  T_{n}(x) = \overline{\calK(\chi_{I}P_{n})(x)}, \quad x \in \Omega
\end{equation*}
(note that if $P_n = Q_n$ then $S_n = T_n$).

Our purpose is to prove the following theorem, which says that it is still
possible to find a bilinear expansion in this context.

\begin{Thm}
\label{thm:expbilin} 
For each $x\in \Omega$, the following expansion\footnote{The 
condition $t \in I$ in the identity~\eqref{eq:expbilin} is \textit{not} a mistake. 
Although $K(x,t)$ is defined on $\Omega \times \Omega$, the functions
$P_n(t)$ are defined, in general, only on~$I$.} holds, with respect to $t$,
in $L^2(I,d\mu)$:
\begin{equation}
\label{eq:expbilin}
  K(x,t) = \sum_{n\in N} P_{n}(t)S_{n}(x).
\end{equation}
Moreover, $\{S_n\}_{n\in N}$ and $\{T_n\}_{n\in N}$ are a pair of complete
biorthogonal sequences in~$\calP$ such that every
$f \in \calP$ can be written as
\begin{equation}
\label{eq:expbilinsamp}
  f(x) = \sum_{n \in N} c_n(f) S_n(x),
  \quad x \in \Omega,
\end{equation}
with
\begin{equation*}
  c_n(f) = \int_{\Omega} f(t) \overline{T_n(t)}\,d\mu(t).
\end{equation*}
The convergence is uniform in every set where $\left\|K(x,\cdot)\right\|_{L^{2}(I,d\mu)}$ is bounded.
\end{Thm}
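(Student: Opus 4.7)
The plan is to transport the biorthogonal expansion in $L^2(I,d\mu)$ across the isometry $\widetilde{\calK}$ to obtain the corresponding expansion in $\calP$. For the bilinear identity \eqref{eq:expbilin}, fix $x \in \Omega$. Since $K(x,\cdot) \in L^2(I,d\mu)$ by assumption, it admits the biorthogonal expansion
\[
K(x,t) = \sum_{n\in N} \alpha_n(x) P_n(t),
\qquad
\alpha_n(x) = \int_I K(x,t)\overline{Q_n(t)}\,d\mu(t),
\]
in $L^2(I,d\mu)$. The coefficient $\alpha_n(x)$ is precisely $\widetilde{\calK}(\chi_I\overline{Q_n})(x) = S_n(x)$, so \eqref{eq:expbilin} follows.

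To establish biorthogonality of $\{S_n\}$ and $\{T_n\}$ in $\calP$, I would write $\overline{T_m} = \calK(\chi_I P_m)$ and apply the multiplication formula \eqref{eq:multiplication}:
\[
\int_\Omega S_n \overline{T_m}\,d\mu
= \int_\Omega S_n\, \calK(\chi_I P_m)\,d\mu
= \int_\Omega \chi_I P_m \cdot \calK(S_n)\,d\mu.
\]
Since $\calK$ inverts $\widetilde{\calK}$, $\calK(S_n) = \chi_I\overline{Q_n}$, and the last integral collapses to $\int_I P_m\overline{Q_n}\,d\mu = \delta_{n,m}$.

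For the expansion \eqref{eq:expbilinsamp}, any $f \in \calP$ takes the form $f = \widetilde{\calK}(\chi_I u)$ with $u \in L^2(I,d\mu)$. Conjugating the hypothesis on $\{P_n\}, \{Q_n\}$ shows that $\{\overline{Q_n}\}, \{\overline{P_n}\}$ is again a complete biorthonormal pair in $L^2(I,d\mu)$, so $u = \sum_n c_n \overline{Q_n}$ with $c_n = \int_I u P_n\,d\mu$. Applying $\widetilde{\calK}$ term-by-term (legitimate by the $L^2$ continuity inherited from the isometry) yields $f = \sum_n c_n S_n$, which in turn gives completeness of $\{S_n\}$ in $\calP$. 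To match the claimed coefficient formula, the identity $\calK f|_I = u$ combined with \eqref{eq:multiplication} gives
\[
c_n = \int_I u\, P_n\,d\mu = \int_\Omega \calK(f)\,\chi_I P_n\,d\mu = \int_\Omega f\, \calK(\chi_I P_n)\,d\mu = \int_\Omega f\,\overline{T_n}\,d\mu.
\]

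Uniform convergence follows from the reproducing kernel structure exactly as in Remark~\ref{rem2}: the isometric isomorphism $\widetilde{\calK}: L^2(I,d\mu) \to \calP$ endows $\calP$ with the reproducing kernel $k(y,x) = \int_I K(y,t)\overline{K(x,t)}\,d\mu(t)$, yielding the pointwise bound $|g(x)| \le \|g\|_{L^2(\Omega,d\mu)}\,\|K(x,\cdot)\|_{L^2(I,d\mu)}$ for every $g \in \calP$. The main technical subtlety I expect is the bookkeeping of complex conjugates: $u$ must be expanded in $\{\overline{Q_n}\}$ rather than in $\{P_n\}$ so that applying $\widetilde{\calK}$ reproduces $S_n$, and one must then verify that the resulting coefficient coincides with the pairing of $f$ against $\overline{T_n}$. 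Everything else is a routine consequence of the defining properties of $\calK$ and $\widetilde{\calK}$ from Subsection~\ref{sub:set-up}.
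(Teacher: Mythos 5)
Your proposal is correct and follows essentially the same path as the paper: the expansion of $K(x,\cdot)$ in $\{P_n\}$ with coefficients identified as $S_n(x)$, the multiplication-formula computation for biorthogonality, and the conversion of $\int_I u P_n\,d\mu$ into $\int_\Omega f\,\overline{T_n}\,d\mu$ all coincide with the paper's argument. The only (harmless) deviation is in deriving \eqref{eq:expbilinsamp}: you expand $u$ in $\{\overline{Q_n}\}$ and push it through $\widetilde{\calK}$ term by term, which tacitly uses the standard fact that the dual system of a Schauder basis of a Hilbert space is again a basis, whereas the paper simply pairs $u$ against the already-established expansion \eqref{eq:expbilin} of $K(x,\cdot)$ and so needs only the stated hypothesis on $\{P_n\}$.
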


\begin{proof}
Let us start by proving~\eqref{eq:expbilin}. Since $K(x,\cdot) \in
L^2(I,d\mu)$ for every $x \in \Omega$, as $\{P_n\}_{n \in N}$ and
$\{Q_n\}_{n \in N}$ are a complete biorthogonal system on
$L^2(I,d\mu)$, we can write
\begin{equation*}
  K(x,t) = \sum_{n \in N} b_n(x) P_n(t)
\end{equation*}
with (by~\eqref{eq:K-Qn})
\begin{equation*}
  b_n(x) = \int_{I} K(x,t) \overline{Q_n(t)}\,d\mu(t)
  = \widetilde{\calK}(\chi_I \overline{Q_n})(x)
  = S_n(x).
\end{equation*}

Now, let us prove the biorthogonality of $\{S_n\}_{n\in N}$ and $\{T_n\}_{n\in N}$.
By definition and the multiplicative formula, we have
\begin{align*}
  \int_{\Omega} S_n \overline{T_m}\,d\mu
  &= \int_{\Omega} S_n \calK(\chi_I P_m)\,d\mu \\
  &= \int_{\Omega} \calK(S_n) \chi_I P_m \,d\mu
  = \int_{I} \overline{Q_n} P_m \,d\mu = \delta_{n,m}.
\end{align*}
Finally, for $f \in \calP$, by applying~\eqref{eq:expbilin},
interchanging the sum and the integral, and using the multiplicative
formula, we have
\begin{align*}
  f(x) &= \int_I u(t) K(x,t)\,d\mu(t)
  = \sum_{n \in N} \Big( \int_I u(t) P_n(t)\,d\mu(t) \Big) S_n(x) \\
  &= \sum_{n \in N} \Big( \int_{\Omega} (\calK f)(t) \chi_I(t) P_n(t)\,d\mu(t) \Big) S_n(x) \\
  &= \sum_{n \in N} \Big( \int_{\Omega} f(t) \calK(\chi_I P_n)(t)\,d\mu(t) \Big) S_n(x)
  = \sum_{n \in N} \Big( \int_{\Omega} f(t) \overline{T_n(t)}\,d\mu(t) \Big) S_n(x)
\end{align*}
and the proof is finished.
\end{proof}

%------------------------------
\section{The Fourier kernel}
%------------------------------

As an example to clarify the technique, and to show how useful the use 
of the \textit{biorthogonal} sequences is, let us look at~\eqref{eq:expsamp}
and~\eqref{eq:geg} in the light of the above scheme.

%------------------------------
\subsection{The classical sampling formula}
%------------------------------

With the notation of the above section,
take $d\mu(x) = dx$, $\Omega = \R$, $I = [-1,1]$
and the kernel $K(x,t) = \frac{1}{\sqrt{2\pi}}e^{ixt}$,
so the operator $\calK$ is the Fourier transform.

The space $\calP$ becomes the classical Paley-Wiener space $PW$.
Now, take $N = \Z$ and the functions
\begin{equation*}
  P_{n}(t) = Q_{n}(t) = \phi_{n}(t) = \frac{1}{\sqrt{2}}e^{i\pi nt},
  \quad n \in \Z.
\end{equation*}
Then, $S_{n}(x)$ is
\begin{equation*}
  S_{n}(x) = \widetilde{\calK}(\chi_I\overline{Q_n})(x)
  = \int_{-1}^{1} \frac{e^{ixt}}{\sqrt{2\pi}} \frac{e^{-i\pi nt}}{\sqrt{2}}\,dt
  = \frac{\sin(x-\pi n)}{\sqrt{\pi}(x-\pi n)}.
\end{equation*}
From this expression, by using~\eqref{eq:expbilin} we obtain~\eqref{eq:expsamp}.

Moreover, using the identity
\begin{equation*}
  \frac{1}{\pi} \int_{\R} \frac{\sin(x-\pi n)}{x- \pi n}\,f(x)\,dx
  = f(\pi n),
  \qquad f \in PW,
\end{equation*}
and~\eqref{eq:expbilinsamp}, we deduce the classical
Whittaker-Shannon-Kotel'nikov sampling theorem
\begin{equation*}
  f(x) = \sum_{n=0}^{\infty} \frac{\sin(x- \pi n)}{x-\pi n}\,f(\pi n).
\end{equation*}

%------------------------------
\subsection{Gegenbauer's plane wave expansion}
%------------------------------

As in the previous case,
take $d\mu(x) = dx$, $\Omega = \R$, $I = [-1,1]$,
the kernel $K(x,t) = \frac{1}{\sqrt{2\pi}}e^{ixt}$,
$\calK$ the Fourier transform, and $\calP = PW$.

But, this time, consider $N = \N \cup \{0\}$ and,
using $C_n^\beta(t)$ to denote the Gegenbauer polynomial of order
$\beta > -1/2$ (with the usual trick of employing the Chebyshev
polynomials if $\beta = 0$), take the biorthonormal system
\begin{align*}
  P_{n}(t) &= C_{n}^{\beta}(t),\\
  Q_{n}(t) &= (1-t^2)^{\beta-1/2} C_{n}^{\beta}(t)/h_n
\end{align*}
with
\begin{equation*}
  h_n = \int_{-1}^1 C_{n}^{\beta}(t)^2 (1-t^2)^{\beta-1/2}\,dt
  = \frac{\pi^{1/2}\Gamma(\beta+1/2)\Gamma(2\beta+n)}
  {\Gamma(\beta)\Gamma(2\beta)(n+\beta)n!}.
\end{equation*}
Using the integral
\begin{equation*}
  \int_{\R} e^{-ixt}x^{-\beta}J_{\beta+n}(x)\,dx
  = \frac{2^{-\beta+1} \pi^{1/2} \Gamma(2\beta) (-1)^n i^{n} n!}
    {\Gamma(\beta+1/2)\Gamma(2\beta+n)}
  \,(1-t^2)^{\beta-1/2} C_{n}^{\beta}(t) \chi_{[-1,1]}(t)
\end{equation*}
(see~\cite[Ch.~3.3, (9), p.~123]{E-TIT-I}) we deduce that
\begin{equation*}
  S_{n}(x) = 2^{\beta-1/2} \pi^{-1/2} i^n \Gamma(\beta) (\beta+n)
  x^{-\beta}\,J_{\beta+n}(x).
\end{equation*}
Then, \eqref{eq:expbilin} becomes~\eqref{eq:geg}.

Moreover, every function $f \in PW$ admits an expansion
in a uniformly convergent Fourier-Neumann series of the form
\begin{equation*}
  f(x) = 2^{\beta-1/2} \pi^{-1/2} \Gamma(\beta)
  \sum_{n = 0}^{\infty} c_{n}(f) i^n (\beta+n)
  x^{-\beta}\,J_{\beta+n}(x),
\end{equation*}
with
\begin{equation*}
  c_{n}(f) = \int_{\R} f(t) \calK(\chi_{[-1,1]}C_n^{\beta})(t)\,dt,
\end{equation*}
corresponding to the expansion~\eqref{eq:expbilinsamp}. 
A more explicit expression (see~\eqref{eq:cnf-Dunkl}) for the coefficients $c_n(f)$ 
will be given in the next section for some values of~$\beta$.
Finally, since
\begin{equation*}
  \left\|K(x,\cdot)\right\|_{L^{2}(I,dx)}
  = \frac{1}{\sqrt{2\pi}} \left\|e^{ix\cdot}\right\|_{L^{2}([-1,1],dx)}
  = \frac{1}{\sqrt{\pi}},
\end{equation*}
Remark~\ref{rem:unifconv} automatically ensure that the above mentioned 
expansions converge uniformly on the real line.

%------------------------------
\section{The Dunkl kernel on the real line}
\label{sec:Dunkl}
%------------------------------

%------------------------------
\subsection{The Dunkl transform}
\label{sub:Dunkl}
%------------------------------

For $\alpha > -1$, let $J_\alpha$ denote the Bessel function of order $\alpha$ and,
for complex values of the variable $z$, let
\begin{equation*}
  \calI_\alpha(z)
  = 2^\alpha \Gamma(\alpha+1)\, \frac{J_\alpha(iz)}{(iz)^\alpha}
  = \Gamma(\alpha+1) \sum_{n=0}^\infty \frac{(z/2)^{2n}}{n!\,\Gamma(n+\alpha+1)}
\end{equation*}
($\calI_\alpha$ is a small variation of the so-called modified
Bessel function of the first kind and order~$\alpha$, usually
denoted by $I_\alpha$; see~\cite{Wat}). Moreover, let us take
\begin{equation*}
  E_\alpha(z) = \calI_\alpha(z) + \frac{z}{2(\alpha+1)}\,
    \calI_{\alpha+1}(z),
  \qquad z \in \C.
\end{equation*}

The Dunkl operators on $\R^n$ are differential-difference operators
associated with some finite reflection groups (see~\cite{Dunkl}). We
consider the Dunkl operator $\Lambda_\alpha$, $\alpha \ge -1/2$, associated
with the reflection group $\Z_2$ on $\R$ given by
\begin{equation}
\label{eq:dunkloper}
  \Lambda_\alpha f(x) = \frac{d}{dx}f(x) + \frac{2\alpha+1}{x}
  \left(\frac{f(x)-f(-x)}{2}\right).
\end{equation}
For $\alpha \ge -1/2$ and $\lambda\in \C$, the initial value problem
\begin{equation}
\label{eq:dunklproblem}
\begin{cases}
  \Lambda_\alpha f(x) = \lambda f(x),\quad x\in \R, \\
  f(0) = 1
\end{cases}
\end{equation}
has $E_\alpha(\lambda x)$ as its unique solution (see~\cite{Dunkl2}
and~\cite{Jeu}); this function is called the Dunkl kernel. For $\alpha = -1/2$, it is
clear that $\Lambda_{-1/2} = d/dx$, and $E_{-1/2}(\lambda x) = e^{\lambda x}$.

Let $d\mu_{\alpha}(x) = (2^{\alpha+1} \Gamma(\alpha+1))^{-1}|x|^{2\alpha+1}\,dx$
and write
\begin{equation}
\label{eq:Ea}
  E_{\alpha}(ix) = 2^{\alpha}\Gamma(\alpha+1)
  \left( \frac{J_{\alpha}(x)}{x^{\alpha}} + \frac{J_{\alpha+1}(x)}{x^{\alpha+1}}\,xi\right).
\end{equation}
In a similar way to the Fourier transform (which is the particular
case $\alpha = -1/2$), the Dunkl transform of order $\alpha \geq -1/2$ is
given by
\begin{equation}
\label{eq:Du-T}
  \calF_{\alpha}f(y) = \int_{\R} f(x)E_{\alpha}(-iyx)
  \,d\mu_{\alpha}(x),\quad y\in \R,
\end{equation}
for $f\in L^{1}(\R,d\mu_{\alpha})$. By means of the Schwartz class
$\calS(\R)$, the definition is extended to
$L^{2}(\R,d\mu_{\alpha})$ in the usual way. In \cite{Jeu},
it is shown that $\calF_{\alpha}$ is an isometric isomorphism on
$L^{2}(\R,d\mu_{\alpha})$ and that
\begin{equation*}
  \calF_{\alpha}^{-1}f(y) = \calF_{\alpha}f(-y)
\end{equation*}
for functions such that
$f,\calF_{\alpha}f\in L^{1}(\R,d\mu_{\alpha})$.

From Fubini's theorem, it follows that the Dunkl transform satisfies the multiplication formula
\begin{equation}
\label{eq:mulFa}
  \int_{\R}u(y) \calF_\alpha v(y)\,d\mu_\alpha(y) 
  = \int_{\R} \calF_\alpha u(y) v(y)\,d\mu_\alpha (y).
\end{equation}

Finally, let us take into account that the Dunkl transform
$\calF_{\alpha}$ can also be defined in
$L^2(\R,d\mu_{\alpha})$ for $-1 < \alpha < -1/2$, although the
expression \eqref{eq:Du-T} is no longer valid for
$f\in L^1(\R,d\mu_{\alpha})$ in general.
However, it preserves the same properties in
$L^2(\R,d\mu_{\alpha})$; see~\cite{Rosenblum} for details. This
allows us to extend our study to the case $\alpha > -1$.

%------------------------------
\subsection{The sampling theorem related to the Dunkl transform}
\label{sub:samp-Dunkl}
%------------------------------

In our general setup developed in subsection~\ref{sub:biorthogonal}, let us start by taking
$\Omega = \R$, $I = [-1,1]$, $d\mu = d\mu_\alpha$, with $\alpha>-1$, and
$L^{2}(I,d\mu) = L^{2}([-1,1],d\mu_{\alpha})$.
On this space, we consider the kernel $K(x,t) = E_{\alpha}(ixt)$,
so the corresponding operator is $\calK = \calF_{\alpha}$,
i.e., the abovementioned Dunkl transform.

Now, as usual in sampling theory, we take the space of Paley-Wiener type that,
in our setting, is defined as
\begin{equation}
\label{eq:PWa}
  PW_\alpha = \left\{ f\in L^2(\R,d\mu_\alpha):
  f(x) = \int_{-1}^{1} u(t)E_{\alpha}(ixt)\,d\mu_{\alpha}(t),\;
  u\in L^{2}([-1,1],d\mu_\alpha) \right\}
\end{equation}
endowed with the norm of $L^{2}(\R,d\mu_\alpha)$.
(This space is characterized in \cite[Theorem~5.1]{AJ} as being
the space of entire functions of exponential type $1$ that belong
to $L^2(\R,d\mu_\alpha)$ when restricted to the real line.) Then,
take, of course $\calP = PW_{\alpha}$.

It is well-known that the Bessel function $J_{\alpha+1}(x)$ has an
increasing sequence of positive zeros $\{s_n\}_{n\ge1}$.
Consequently, the real function
$\operatorname{Im}(E_{\alpha}(ix)) = \frac{x}{2(\alpha+1)}\,\calI_{\alpha+1}(ix)$
is odd and it has an infinite sequence of zeros $\{s_n\}_{n\in \Z}$
(with $s_{-n} = -s_n$ and $s_0 = 0$).
Then, following~\cite{CV} (or~\cite{CPRV}), let us define the functions
\begin{equation}
\label{eq:ean}
  e_{\alpha,n}(t) = d_n E_{\alpha}(is_n t),
  \quad n\in \Z, \quad t \in [-1,1],
\end{equation}
where
\begin{equation*}
  d_n = \frac{2^{\alpha/2}(\Gamma(\alpha+1))^{1/2}}
    {|\calI_{\alpha}(is_n)|}, \quad n \ne 0,
  \qquad
  d_0 = 2^{(\alpha+1)/2}(\Gamma(\alpha+2))^{1/2}.
\end{equation*}
With this notation, the sequence of functions
$\{e_{\alpha,n}\}_{n\in \Z}$
is a complete orthonormal system in $L^2([-1,1],d\mu_{\alpha})$, for $\alpha>-1$.
Thus, let us take $N = \Z$ and
$P_{n}(t) = Q_n(t) = e_{\alpha,n}(t)$.

On the other hand, let us use that, for $x,y\in \R$, $x \ne  y$ and $\alpha>-1$, we have
\begin{equation}
\label{eq:Lommel-Dunkl}
  \int_{-1}^1 E_{\alpha}(ixt)\overline{E_{\alpha}(iyt)}\, d\mu_{\alpha}(t)
  = \frac{1}{2^{\alpha+1}\Gamma(\alpha+2)}
    \frac{x\calI_{\alpha+1}(ix)\calI_{\alpha}(iy)
      - y\calI_{\alpha+1}(iy)\calI_{\alpha}(ix)}{x-y}
\end{equation}
(the proof can be found in~\cite{BCV} or~\cite{CV}).
Then,
\begin{align*}
  S_{n}(x) &= \widetilde{\calK}(\chi_{[-1,1]}\overline{Q_n})(x)
  = \int_{-1}^1 E_\alpha (ixt) \overline{e_{\alpha,n}(t)} \,d\mu_\alpha(t) \\
  &= \frac{d_n}{2^{\alpha+1}\Gamma(\alpha+2)}
  \frac{x \calI_{\alpha+1}(ix) \calI_{\alpha}(is_n)
    - s_n \calI_{\alpha+1}(is_n) \calI_{\alpha}(ix) }{x-s_n} \\
  &= \frac{d_n}{2^{\alpha+1}\Gamma(\alpha+2)}
  \frac{x \calI_{\alpha+1}(ix) \calI_{\alpha}(is_n)}{x-s_n}
\end{align*}
because $\calI_{\alpha+1}(is_n) = 0$.
Consequently,
\begin{align*}
  E_\alpha(ixt) &= \sum_{n \in \Z} e_{\alpha,n}(t)
  \frac{d_n}{2^{\alpha+1}\Gamma(\alpha+2)}
  \frac{x \calI_{\alpha+1}(ix) \calI_{\alpha}(is_n)}{x-s_n} \\
  &= \calI_{\alpha+1}(ix) + \sum_{n\in\Z \setminus\{0\}}
  \frac{E_{\alpha}(is_nt)}{2(\alpha+1) \calI_{\alpha}(is_n)}
  \frac{x \calI_{\alpha+1}(ix)}{x-s_n},
\end{align*}
which corresponds to the formula~\eqref{eq:expbilin} in Theorem~\ref{thm:expbilin}.

Finally, the formula~\eqref{eq:expbilinsamp} in Theorem~\ref{thm:expbilin} says that,
if $f\in PW_\alpha$, then $f$ has the representation
\begin{equation}
\label{eq:samp-Dunkl}
  f(x) = f(s_0) \calI_{\alpha+1}(ix)
  + \sum_{n\in\Z \setminus\{0\}} f(s_n)
    \frac{x\calI_{\alpha+1}(ix)}{2(\alpha+1)\calI_{\alpha}(is_n)(x-s_n)},
\end{equation}
that converges in the norm of $L^{2}(\R,d\mu_\alpha)$. This is so because
the coefficients $c_n(f)$ in~\eqref{eq:expbilinsamp} are $c_n(f) = d_n f(s_n)$,
as we can see in what follows:
\begin{align*}
  c_n(f) &= \int_{\R} f(t) \calF_\alpha (\chi_{[-1,1]}e_{\alpha,n})(t) \,d\mu_\alpha(t) \\
  &= \int_{\R} f(t) \int_{-1}^1 e_{\alpha,n}(x) E_\alpha(-ixt) \,d\mu_\alpha(x) \,d\mu_\alpha(t) \\
  &= \int_{-1}^1 e_{\alpha,n}(x) \int_{\R} f(t) E_\alpha(-ixt) \,d\mu_\alpha(t) \,d\mu_\alpha(x) \\
  &= d_n \int_{-1}^1 E_{\alpha}(is_nx) \int_{\R} f(t) E_\alpha(-ixt) \,d\mu_\alpha(t) \,d\mu_\alpha(x)
  = d_n f(s_n),
\end{align*}
where in the last step we have used that $f\in PW_\alpha$.
%\begin{equation*}
%  \frac{1}{2^{\alpha+1}\Gamma(\alpha+2)}
%  \int_{\R} \frac{t\calI_{\alpha+1}(it)\calI_\alpha(is_n)}{t-s_n}f(t)\,d\mu_\alpha(t) = f(s_n).
%\end{equation*}
Moreover, by using L'Hopital rule in \eqref{eq:Lommel-Dunkl}, it is not difficult to check that
\begin{align*}
\qquad
\left\| \frac{E_\alpha(x \cdot)}{(x \cdot)^\alpha} \right\|^2_{L^2([-1,1],d\mu_\alpha)}
  &= \int_{-1}^1 |E_{\alpha}(ixr)|^2\, d\mu_{\alpha}(r) \\
  &= \frac{1}{2^{\alpha+1}\Gamma(\alpha+2)}
  \bigg( \frac{x^2}{2(\alpha+1)}\calI_{\alpha+1}^2(ix) \\
  &\qquad- (2\alpha+1)\calI_{\alpha+1}(ix)\calI_{\alpha}(ix)
  + 2(\alpha+1)\calI_{\alpha}^2(ix) \bigg),
\end{align*}
and this norm is bounded on every compact set on the real line.
So, by Remark~\ref{rem:unifconv}, the series~\eqref{eq:samp-Dunkl} converges uniformly in compact subsets of~$\R$.
\eqref{eq:samp-Dunkl} is the sampling theorem related to the Dunkl transform that
has been established in~\cite{CV}.

%------------------------------
\subsection{Fourier-Neumann type expansion}
\label{sub:main-Dunkl}
%------------------------------

Following~\cite[Definition~1.5.5, p.~27]{DX}, let us introduce the generalized
Gegenbauer polynomials $C_n^{(\lambda,\nu)}(t)$ for $\lambda>-1/2$, $\nu \ge 0$
and $n \ge 0$
(the case $\nu=0$ corresponding to the ordinary Gegenbauer polynomials);
actually, for convenience with the notation of this paper,
we are going to use $C_n^{(\beta+1/2,\alpha+1/2)}(x)$. In this way, for $\beta>-1$
and $\alpha\ge -1/2$, the generalized Gegenbauer polynomials are defined by
\begin{align}
\label{eq:C2n}
C_{2n}^{(\beta+1/2,\alpha+1/2)}(t)
&= (-1)^n\,\frac{(\alpha+\beta+1)_n}{(\alpha+1)_n} P_n^{(\alpha,\beta)}(1-2t^2), \\
\label{eq:C2n1}
C_{2n+1}^{(\beta+1/2,\alpha+1/2)}(t)
&= (-1)^n\,\frac{(\alpha+\beta+1)_{n+1}}{(\alpha+1)_{n+1}} t P_n^{(\alpha+1,\beta)}(1-2t^2),
\end{align}
where in the coefficients we are using the Pochhammer symbol
$(a)_n = a(a+1)\cdots(a+n-1) = \Gamma(a+n)/\Gamma(a)$. Note that there is no problem in
extending the definition of the generalized Gegenbauer polynomials taking $\alpha>-1$, 
so we will assume this situation.

From the $L^2$-norm of the Jacobi polynomials (see~\cite[Ch.~16.4,
(5), p.~285]{E-TIT-II}), it is easy to find
\begin{align}
\label{eq:hC2n}
  h_{2n}^{(\beta,\alpha)} &= \int_{-1}^1 \left[C_{2n}^{(\beta+1/2,\alpha+1/2)}(t)\right]^2
  (1-t^2)^\beta \,d\mu_\alpha(t) \\
  &= \frac{1}{2^{\alpha+1}} 
  \frac{\Gamma(\alpha+1)\Gamma(\beta+n+1)\Gamma(\alpha+\beta+n+1)}{(\alpha+\beta+2n+1)
  \Gamma(\alpha+\beta+1)^2 \Gamma(\alpha+n+1)\, n!},
\notag \\
\label{eq:hC2n1}
  h_{2n+1}^{(\beta,\alpha)} &= \int_{-1}^1 \left[C_{2n+1}^{(\beta+1/2,\alpha+1/2)}(t)\right]^2
  (1-t^2)^\beta \,d\mu_\alpha(t) \\
  &= \frac{1}{2^{\alpha+1}} \frac{\Gamma(\alpha+1)\Gamma(\beta+n+1)\Gamma(\alpha+\beta+n+2)}
  {(\alpha+\beta+2n+2) \Gamma(\alpha+\beta+1)^2 \Gamma(\alpha+n+2)\, n!}.
\notag
\end{align}

Finally, given $\alpha>-1$, we define the functions $\calJ_{\alpha,n}(x)$ by 
\begin{equation*}
  \calJ_{\alpha,n}(x) = \frac{J_{\alpha+n+1}(x)}{x^{\alpha+1}},
  \qquad x \in \R,
  \quad n = 0,1,2,\dots;
\end{equation*}
as these functions arise in Fourier-Neumann series, we will allude to 
$\calJ_{\alpha,n}(x)$ using the name of Neumann functions.\footnote{In the literature, 
the name ``Neumann functions'' is sometimes used for the Bessel functions of the 
second kind $Y_\alpha(x)$, but these functions will not arise in this paper.}
From the identities
\begin{gather*}
  \int_0^{\infty} \frac{J_a(x) J_b(x)}{x}\,dx
  = \frac{2}{\pi} \frac{\sin((b-a)\pi/2)}{b^2-a^2},
  \quad a>0,\ b>0,\ a \ne b,
  \\
  \int_0^{\infty} \frac{J_a(x)^2}{x}\,dx
  = \frac{1}{2a}, \quad a > 0,
\end{gather*}
and taking into account that $\calJ_{\alpha,n}(x)$ is even or odd
according to $n$ being even or odd, respectively, it follows that
$\{\calJ_{\alpha,n}\}_{n\ge0}$ is an orthogonal system on
$L^2(\R,d\mu_\alpha)$, namely,
\begin{equation*}
  \int_{\R} \calJ_{\alpha,n}(x) \calJ_{\alpha,m}(x)
  \, d\mu_{\alpha}(x) = \frac{\delta_{n,m}}{2^{\alpha+1}\Gamma(\alpha+1)(\alpha+n+1)}.
\end{equation*}

Generalized Gegenbauer polynomials and Neumann functions are the
main ingredients for obtaining the Dunkl analogue of Gegenbauer's expansion of the plane wave. 
To establish this result we need a relation between them. By using the notation
\begin{align}
\label{eq:calP}
  \calP^{(\alpha,\beta)}_n(t)
    &= C_n^{(\beta+1/2,\alpha+1/2)}(t),\\
\label{eq:calQ}
  \calQ^{(\alpha,\beta)}_n(t)
    &= \big(h_n^{(\beta,\alpha)}\big)^{-1} (1-t^2)^\beta C_n^{(\beta+1/2,\alpha+1/2)}(t)
\end{align}
(where $h_n^{(\beta,\alpha)}$ is given in~\eqref{eq:hC2n} and~\eqref{eq:hC2n1}),
this relation is given in the following lemma that, moreover,
can have independent interest:

\begin{Lem}
\label{lem:F-PQ}
Let $\alpha,\beta > -1$, $\alpha+\beta>-1$,
and $k = 0,1,2,\dots$. The Dunkl transform of order $\alpha$ of
$\calJ_{\alpha+\beta,k}(x)$ is
\begin{equation}
\label{eq:F-Q}
  \calF_{\alpha}(\calJ_{\alpha+\beta,k})(t)
  % = \text{constant} (1-t^2)^\beta
  % \mathcal{C}_{n}^{(\beta+1/2,\beta+1/2)}(t) \chi_{[-1,1]}(t).
  = \frac{(-i)^k}{2^{\alpha+\beta+1}\Gamma(\alpha+\beta+1)(\alpha+\beta+k+1)}
  \, \calQ^{(\alpha,\beta)}_{k}(t)\chi_{[-1,1]}(t).
\end{equation}
Moreover, if $\beta<1$, we have\footnote{Observe that nothing is
said about outside the interval $[-1,1]$; this does not mean that this
expression vanishes for $|t|>1$, that is not true when 
$\beta \ne 0$.}
\begin{equation}
\label{eq:F-P}
  \calF_{\alpha}(|\cdot|^{2\beta}\calJ_{\alpha+\beta,k})(t)
  % = \textrm{constant} \mathcal{C}_{k}^{(\beta+1/2,\alpha+1/2)}(t).
  = \frac{2^\beta \Gamma(\alpha+\beta+1)}{\Gamma(\alpha+1)}
  \, (-i)^k \calP^{(\alpha,\beta)}_{k}(t),\quad t\in [-1,1].
\end{equation}
\end{Lem}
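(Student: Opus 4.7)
\smallskip

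\noindent\textbf{Proof plan for Lemma \ref{lem:F-PQ}.}

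The plan is to compute the two Dunkl transforms by expanding the Dunkl kernel via \eqref{eq:Ea}, exploiting parity to reduce each transform to an ordinary Hankel-type integral on $(0,\infty)$, and then invoking classical Bateman/Erd\'elyi integral formulas expressing such integrals in terms of Jacobi polynomials. First, I would split each identity into two cases according to the parity of $k$, since $\calJ_{\alpha+\beta,k}(x)=J_{\alpha+\beta+k+1}(x)/x^{\alpha+\beta+1}$ is even if $k$ is even and odd if $k$ is odd, and the same parity holds for $\calP^{(\alpha,\beta)}_k$ and $\calQ^{(\alpha,\beta)}_k$ by \eqref{eq:C2n}--\eqref{eq:C2n1}.

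For even $k=2n$, the odd imaginary piece of $E_\alpha(-ixt)$ integrates to zero against $\calJ_{\alpha+\beta,2n}$, leaving (after writing $d\mu_\alpha(x)$ explicitly and using the evenness to pass to $(0,\infty)$) an integral of the form
\[
\int_0^\infty x^{-\beta} J_{\alpha+\beta+2n+1}(x) J_\alpha(xt)\,dx
\]
up to a constant prefactor. For odd $k=2n+1$, only the odd piece of $E_\alpha$ contributes, producing the analogous integral with $J_\alpha$ replaced by $J_{\alpha+1}$ and index shifted. These are precisely the integrals evaluated in Erd\'elyi's Tables of Integral Transforms and in standard references: for $0<t<1$ they are given by $t^\alpha (1-t^2)^\beta P_n^{(\alpha,\beta)}(1-2t^2)$ (respectively $t^{\alpha+1}(1-t^2)^\beta P_n^{(\alpha+1,\beta)}(1-2t^2)$) times explicit Gamma-function factors, and vanish for $|t|>1$. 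Substituting these evaluations and recognising the right-hand side via \eqref{eq:C2n}--\eqref{eq:hC2n1} and \eqref{eq:calQ}, after matching the constants one obtains~\eqref{eq:F-Q} on $[-1,1]$, and the vanishing for $|t|>1$ yields the characteristic function $\chi_{[-1,1]}$.

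For~\eqref{eq:F-P}, the same parity splitting is carried out, but the extra factor $|x|^{2\beta}$ changes the Bessel weight so that one now needs an integral of the form
\[
\int_0^\infty x^{\beta} J_{\alpha+\beta+2n+1}(x) J_\alpha(xt)\,dx,
\]
which is a discontinuous Weber--Schafheitlin type integral; the hypothesis $\beta<1$ is exactly the convergence condition at infinity, and $\alpha+\beta>-1$ is the convergence condition at the origin. On $0<t<1$ this integral equals (up to explicit Gamma factors) $t^\alpha P_n^{(\alpha,\beta)}(1-2t^2)$ with \emph{no} factor $(1-t^2)^\beta$; the representation on $|t|>1$ is a different Jacobi-type expression, which is why the statement is restricted to $t\in[-1,1]$. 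Inserting this value and comparing with~\eqref{eq:calP} and~\eqref{eq:C2n}--\eqref{eq:C2n1} produces~\eqref{eq:F-P}.

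The main obstacle is purely bookkeeping: tracking the various $2^{\alpha}$, $\Gamma$-function, and $(-1)^n$ factors arising from $d\mu_\alpha$, the definition~\eqref{eq:Ea} of $E_\alpha$, the normalizations $h_k^{(\beta,\alpha)}$ in~\eqref{eq:hC2n}--\eqref{eq:hC2n1}, and the Erd\'elyi formulas, and then verifying that the even and odd cases assemble uniformly into the single exponent $(-i)^k$ on the right-hand sides. Once one trusts the two classical Bateman/Erd\'elyi evaluations, the rest is a bounded computation with Pochhammer symbols using $(a)_n=\Gamma(a+n)/\Gamma(a)$.
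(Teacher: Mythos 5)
Your plan coincides with the paper's own proof: parity decomposition of the Dunkl kernel via \eqref{eq:Ea} reducing each case to a Weber--Schafheitlin integral on $(0,\infty)$, evaluation of those integrals via the classical ${}_2F_1$ formula from Erd\'elyi's Tables rewritten in Jacobi-polynomial form (the paper isolates this step as Lemma~\ref{lem:HTjnab}), the vanishing of the coefficient $1/\Gamma(\tfrac{\lambda+\mu-\nu+1}{2})$ giving $\chi_{[-1,1]}$ in \eqref{eq:F-Q}, and $\beta<1$ entering exactly as the convergence condition for the $x^{\beta}$-weighted integral in \eqref{eq:F-P}. The remaining work is the constant bookkeeping you describe, so the proposal is correct and essentially identical in route.
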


\begin{Rem}
\label{rem:Ntoinfty}
There is a delicacy with the formulas in Lemma~\ref{lem:F-PQ}. 
Actually, $\calF_{\alpha}$ was defined, as a
first step, as a Lebesgue integral for suitably integrable functions. Then,
$\calF_{\alpha}$ is extended to $L^p$ spaces where the integral representation is no
longer valid for some functions. Now, the integrals 
\begin{equation*}
  \int_{0}^{\infty}x^{-\lambda }J_{\mu }(ax)J_{\nu }(bx)\,dx
\end{equation*}
from~\cite[Ch.~8.11]{E-TIT-II} or~\cite[Ch.~XIII]{Wat}
that we will use in the proof are improper Riemann integrals. Hence, the proper understanding
of those integrals should be as
\begin{equation*}
  \lim_{N\to\infty}
  \int_{0}^{N} x^{-\lambda }J_{\mu }(ax)J_{\nu }(bx)\,dx.
\end{equation*}
Since $\calJ_{\alpha+\beta,k} \chi_{[-N,N]}$ and 
$|\cdot|^{2\beta}\calJ_{\alpha+\beta,k} \chi_{[-N,N]}$
are integrable functions, the integral form
of $\calF_{\alpha}$ is valid here and \eqref{eq:F-Q} and \eqref{eq:F-P}
can be understood as
\begin{equation*}
  \lim_{N\to\infty} \calF_{\alpha}(\calJ_{\alpha+\beta,k} \chi_{[-N,N]})(t),
  \qquad
  \lim_{N\to\infty} \calF_{\alpha}(|\cdot|^{2\beta}\calJ_{\alpha+\beta,k} \chi_{[-N,N]})(t),
\end{equation*}
and the identities in the lemma hold in the almost everywhere sense. 
Finally, the $L^2$ boundedness of $\calF_{\alpha}$ allows us to understand
these identities in the $L^2$ sense.
From now on, we will no longer mention these details.
\end{Rem}

We postpone the proof of Lemma~\ref{lem:F-PQ} to subsection~\ref{sub:prooflemma}. 
With Lemma~\ref{lem:F-PQ}, we already have all the tools for proving:

\begin{Thm}
\label{thm:Dunkl}
Let $\alpha,\beta > -1$ and $\alpha+\beta>-1$. Then for each $x\in \R$ the following expansion holds in
$L^{2}([-1,1],d\mu_{\alpha})$:
\begin{equation}
\label{eq:JA-Dunkl}
  E_\alpha(ixt) = 2^{\alpha+\beta+1} \Gamma(\alpha+\beta+1)
  \sum_{n = 0}^{\infty} i^{n}(\alpha+\beta+n+1)
  \calJ_{\alpha+\beta,n}(x) C_{n}^{(\beta+1/2,\alpha+1/2)}(t).
\end{equation}
Moreover, for $\beta < 1$ and $f \in PW_\alpha$, we have the orthogonal expansion
\begin{equation}
\label{eq:f-Dunkl}
  f(x) = \sum_{n=0}^\infty a_n(f) (\alpha+\beta+n+1) \calJ_{\alpha+\beta,n}(x)
\end{equation}
with
\begin{equation}
\label{eq:cnf-Dunkl}
  a_n(f) = 2^{\alpha+\beta+1} \Gamma(\alpha+\beta+1)
  \int_{\R} f(t) \calJ_{\alpha+\beta,n}(t) \,d\mu_{\alpha+\beta}(t).
\end{equation}
Furthermore, the series converges uniformly in compact subsets of~$\R$.
\end{Thm}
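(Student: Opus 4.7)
The plan is to specialize Theorem~\ref{thm:expbilin} to $\Omega=\R$, $I=[-1,1]$, $d\mu=d\mu_\alpha$, kernel $K(x,t)=E_\alpha(ixt)$ (so that $\calK=\calF_\alpha$, $\calP=PW_\alpha$, and the multiplication formula~\eqref{eq:mulFa} holds), together with the biorthogonal pair $P_n=\calP_n^{(\alpha,\beta)}$ and $Q_n=\calQ_n^{(\alpha,\beta)}$ from~\eqref{eq:calP}--\eqref{eq:calQ}. First I would check that these two sequences are biorthogonal in $L^2([-1,1],d\mu_\alpha)$: the generalized Gegenbauer polynomials are real and orthogonal with respect to $(1-t^2)^\beta\,d\mu_\alpha(t)$ with squared norms $h_n^{(\beta,\alpha)}$ as in~\eqref{eq:hC2n}--\eqref{eq:hC2n1}, and the normalization built into $\calQ_n^{(\alpha,\beta)}$ cancels this factor to produce $\delta_{n,m}$. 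Completeness of the system reduces to the density of polynomials in $L^2([-1,1],(1-t^2)^\beta d\mu_\alpha)$ on the bounded interval.

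With the setup in place, computing $S_n$ is immediate from Lemma~\ref{lem:F-PQ}. Since $\calQ_n^{(\alpha,\beta)}$ is real, $\overline{Q_n}=Q_n$, and~\eqref{eq:F-Q} together with the inversion identity $\widetilde{\calF_\alpha}\calF_\alpha=\mathrm{id}$ on $L^2(\R,d\mu_\alpha)$ yields
\[
S_n(x)=\widetilde{\calF_\alpha}(\chi_{[-1,1]}\calQ_n^{(\alpha,\beta)})(x)
=i^n\,2^{\alpha+\beta+1}\Gamma(\alpha+\beta+1)(\alpha+\beta+n+1)\,\calJ_{\alpha+\beta,n}(x).
\]
Substituting into~\eqref{eq:expbilin} produces~\eqref{eq:JA-Dunkl} at once.

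To identify the coefficients in~\eqref{eq:f-Dunkl} for $f\in PW_\alpha$, I would start from the general formula in Theorem~\ref{thm:expbilin}, namely $c_n(f)=\int_\R f(t)\overline{T_n(t)}\,d\mu_\alpha(t)$ with $\overline{T_n}=\calF_\alpha(\chi_{[-1,1]}\calP_n)$. By~\eqref{eq:F-P} (whose hypothesis $\beta<1$ is precisely why this assumption appears in the theorem), $\chi_{[-1,1]}\calP_n^{(\alpha,\beta)}$ equals a constant multiple of the restriction to $[-1,1]$ of $\calF_\alpha(|\cdot|^{2\beta}\calJ_{\alpha+\beta,n})$. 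Using $\mathrm{supp}(\calF_\alpha f)\subset[-1,1]$ for $f\in PW_\alpha$, together with two applications of the multiplication formula~\eqref{eq:mulFa}, the Dunkl inversion identity $\calF_\alpha\calF_\alpha f(y)=f(-y)$, and the parity of $\calJ_{\alpha+\beta,n}(t)$ (which matches the parity of $n$), one rewrites $c_n(f)$ as a constant multiple of $\int_\R f(t)|t|^{2\beta}\calJ_{\alpha+\beta,n}(t)\,d\mu_\alpha(t)$. The measure conversion $|t|^{2\beta}\,d\mu_\alpha(t)=(2^\beta\Gamma(\alpha+\beta+1)/\Gamma(\alpha+1))\,d\mu_{\alpha+\beta}(t)$ together with the bookkeeping of factors of $i^n$ delivers~\eqref{eq:cnf-Dunkl}.

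Finally, uniform convergence on compact subsets of $\R$ is immediate from Remark~\ref{rem2}, since the explicit expression for $\|E_\alpha(ix\cdot)\|_{L^2([-1,1],d\mu_\alpha)}^2$ derived in Subsection~\ref{sub:samp-Dunkl} is continuous and hence locally bounded in $x$. The main technical obstacle in the above plan is the coefficient identification, because~\eqref{eq:F-P} only describes the Dunkl transform on $[-1,1]$; exploiting the support condition $\mathrm{supp}(\calF_\alpha f)\subset[-1,1]$ and tracking the parity, sign, and weight-measure conversion factors is what makes the final constant agree with~\eqref{eq:cnf-Dunkl}.
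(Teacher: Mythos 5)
Your proposal is correct and follows essentially the same route as the paper: specialize Theorem~\ref{thm:expbilin} to the Dunkl setting with the pair $\calP_n^{(\alpha,\beta)}$, $\calQ_n^{(\alpha,\beta)}$, read off $S_n$ from \eqref{eq:F-Q}, and identify $c_n(f)$ via \eqref{eq:F-P} using two applications of the multiplication formula, the inversion identity $\calF_\alpha(u\chi_{[-1,1]})(t)=f(-t)$, and the parity of $\calJ_{\alpha+\beta,n}$ — exactly the paper's argument, including your correct observation that the support restriction in \eqref{eq:F-P} is harmless because the relevant integral lives on $[-1,1]$. The only (minor) additions are your explicit verification of biorthogonality/completeness and of the measure conversion $|t|^{2\beta}d\mu_\alpha = (2^\beta\Gamma(\alpha+\beta+1)/\Gamma(\alpha+1))\,d\mu_{\alpha+\beta}$, which the paper leaves implicit.
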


\begin{proof}
In the biorthogonal setup given in subsection~\ref{sub:biorthogonal}, let 
$\Omega = \R$, $I = [-1,1]$,
the space $L^{2}(I,d\mu) = L^{2}([-1,1],d\mu_{\alpha})$,
and the kernel $K(x,t) = E_{\alpha}(ixt)$,
from which the operator $\calK$ becomes the Dunkl transform~$\calF_{\alpha}$
(and $\widetilde{K} = \calF_{\alpha}^{-1}$).
Also, consider the Paley-Wiener space $\calP = PW_{\alpha}$ (see~\eqref{eq:PWa}).
Finally, for $N= \N \cup \{0\}$, take the biorthonormal system given by
$P_{n}(t) = \calP_{n}^{(\alpha,\beta)}(t)$
and
$Q_{n}(t) = \calQ_{n}^{(\alpha,\beta)}(t)$ as in~\eqref{eq:calP}
and~\eqref{eq:calQ}.
From \eqref{eq:F-Q}, we have
\begin{equation*}
  S_{n}(x) = 2^{\alpha+\beta+1} \Gamma(\alpha+\beta+1) i^n
  (\alpha+\beta+n+1) \calJ_{\alpha+\beta,n}(x).
\end{equation*}
In this situation, the formula~\eqref{eq:expbilin} in Theorem~\ref{thm:expbilin}
gives~\eqref{eq:JA-Dunkl}.

Now, let us consider
\begin{equation*}
  T_n(x) = \overline{\calK(\chi_{[-1,1]}P_n)(x)}
  = \overline{\calF_{\alpha}(\chi_{[-1,1]} \calP^{(\alpha,\beta)}_n)(x)}.
\end{equation*}
Then, the identity given by \eqref{eq:expbilinsamp} becomes
\begin{equation*}
  f(x) = 2^{\alpha+\beta+1} \Gamma(\alpha+\beta+1)
  \sum_{n=0}^\infty c_n(f) i^n (\alpha+\beta+n+1)
  \calJ_{\alpha+\beta,n}(x),
  \quad f\in PW_{\alpha},
\end{equation*}
with
\begin{equation*}
  c_n(f) = \int_{\R} f(t) \calF_{\alpha}(\chi_{[-1,1]}
    \calP^{(\alpha,\beta)}_n)(t) \,d\mu_\alpha(t).
\end{equation*}
Let us see that, when $\beta<1$, the coefficient $c_n(f)$ can be written as
\begin{equation*}
  c_n(f) = (-i)^n \int_{\R} f(t) \calJ_{\alpha+\beta,n}(t)
  \,d\mu_{\alpha+\beta}(t),
\end{equation*}
which implies~\eqref{eq:cnf-Dunkl} with
$a_n(f) = 2^{\alpha+\beta+1} \Gamma(\alpha+\beta+1) i^n c_n(f)$.

Indeed, if we consider $u$ such that
$f = \calF_{\alpha}^{-1}(u\chi_{[-1,1]})$
and use the multiplication formula~\eqref{eq:mulFa}, we can write
\begin{equation*}
  c_n(f) = \int_{-1}^1 u(x) \calP^{(\alpha,\beta)}_n(x) \,d\mu_\alpha(x).
\end{equation*}
Now, by~\eqref{eq:F-P} and the multiplication formula again, we have
\begin{align*}
  c_n(f) &= \int_{-1}^1 u(x) \frac{i^n \Gamma(\alpha+1)}{2^\beta \Gamma(\alpha+\beta+1)}
  \calF_\alpha(|\cdot|^{2\beta} \calJ_{\alpha+\beta,n})(x) \,d\mu_\alpha(x) \\
  &= i^n \int_{\R} \calF_{\alpha}(u\chi_{[-1,1]})(t)
  \calJ_{\alpha+\beta,n}(t) \,d\mu_{\alpha+\beta}(t).
\end{align*}
It is clear that $\calF_{\alpha}(u\chi_{[-1,1]})(t) = f(-t)$, 
so the change of variable from $t$ to $-t$ gives
\begin{equation*}
  c_n(f) = (-i)^n \int_{\R} f(t) \calJ_{\alpha+\beta,n}(t)
  \,d\mu_{\alpha+\beta}(t)
\end{equation*}
because $\calJ_{\alpha+\beta,n}(-t) = (-1)^n \calJ_{\alpha+\beta,n}(t)$.
\end{proof}

% Thus, from the point of view of the novelty of the formula~\eqref{eq:JA-Dunkl}, ...

\begin{Rem}
\label{rem:Rosler}
Actually, formulas \eqref{eq:geg} and \eqref{eq:JA-Dunkl} are equivalent for $\alpha\ge -1/2$. The proof in one direction is clear, just by specializing the parameters. To obtain \eqref{eq:JA-Dunkl} from \eqref{eq:geg}, we can use the intertwining operator
\begin{equation*}
  V_\alpha g(t) = \frac{\Gamma(2\alpha+2)}{2^{2\alpha+1} \Gamma(\alpha+1/2)\Gamma(\alpha+3/2)}
  \int_{-1}^1 g(st) (1-s)^{\alpha-1/2} (1+s)^{\alpha+1/2}\,ds
\end{equation*}
(see~\cite[Definition~1.5.1, p.~24]{DX}, we change the parameter $\mu$ in the definition given in \cite{DX} by $\alpha+1/2$), defined for $\alpha\ge -1/2$. With this notation we have
\begin{equation}
\label{eq:VaCn}
  V_\alpha C_n^{\alpha+\beta+1}(t) = C_n^{(\beta+1/2,\alpha+1/2)}(t)
\end{equation}
and
\begin{equation*}
  V_\alpha e^{i\cdot}(t) = E_\alpha(it).
\end{equation*}
In this way, applying $V_\alpha$ to~\eqref{eq:geg} (with $\alpha+\beta+1$ instead of~$\beta$) we get~\eqref{eq:JA-Dunkl}.
This idea has been used for the higher rank in~\cite{Rosler}, where the author assumes that \eqref{eq:geg} was already known, and then \eqref{eq:JA-Dunkl} is established for $\alpha\ge -1/2$ by using the intertwining operator. This gives a considerably shorter proof. Instead, with the method followed in the proof of Theorem~\ref{thm:Dunkl}, the identity \eqref{eq:JA-Dunkl} not only can be found for $\alpha>-1$, but also is proved directly and then, as a particular case, \eqref{eq:geg} holds.
\end{Rem}
% With the method followed in the proof of Theorem~\ref{thm:Dunkl}, we do not only 
% find \eqref{eq:JA-Dunkl} for $\alpha>-1$, but we also prove \eqref{eq:JA-Dunkl} 
% directly and then, as a particular case, \eqref{eq:geg} holds.

\begin{Rem}
Another way of obtaining~\eqref{eq:JA-Dunkl} is as follows. 
Some results from~\cite{FW} were generalized in~\cite{Ve} to
%\begin{multline*}
\begin{equation*}
  \sum_{m=0}^{\infty} a_mb_m \frac{(zw)^m}{m!} 
  = \sum_{n=0}^{\infty} \frac{(-z)^n}{n!\, (\gamma+n)_n} 
  \left( \sum_{r=0}^\infty \frac{b_{n+r}z^r}{r!\,(\gamma+2n+1)_r} \right) 
%  \\ \times 
  \left( \sum_{s=0}^n \frac{(-n)_s (n+\gamma)_s}{s!} \,a_s w^s \right).
\end{equation*}
%\end{multline*}
When $z$ and $w$ are replaced by $z\gamma$ and $w/\gamma$, respectively, 
and we let $\gamma \to \infty$, we get the
companion formula
\begin{equation*}
  \sum_{m=0}^{\infty} a_mb_m (zw)^m = \sum_{n=0}^{\infty} \frac{(-z)^n}{n!}
  \Biggl( \sum_{j=0}^\infty \frac{b_{n+j}}{j!} \, z^j \Biggr) 
  \Biggl( \sum_{k=0}^n (-n)_k a_k w^k \Biggr)
\end{equation*}
(these formulas are also stated in~\cite[Ch.~9]{Ism}).
These expansions contain several expansions in terms of Jacobi polynomials 
of argument $1 - 2t^2$ (i.e., generalized Gegenbauer polynomials). 
In particular, \eqref{eq:JA-Dunkl} follows in this way.
\end{Rem}

%------------------------------
\subsection{Consequences for the Hankel transform}
%------------------------------

For $\alpha>-1$, consider the so-called modified Hankel transform $H_{\alpha}$, that is
\begin{equation}
\label{eq:htr}
  H_{\alpha}f(y) = \int_{0}^{\infty} \frac{J_{\alpha}(xy)}{(xy)^{\alpha}}
  \,f(y)x^{2\alpha+1}\,dx, \quad x>0.
\end{equation}
The kernel $E_\alpha(ixt)$ of the Dunkl transform~\eqref{eq:Du-T}
can be written in terms of the Bessel functions of order $\alpha$ and
$\alpha+1$, and this clearly allows us to study the Hankel transform
as a simple consequence of the Dunkl transform. In particular, if
we have a function $f \in L^2((0,\infty),x^{2\alpha+1}\,dx)$, we
can take the even extension $f(|\cdot|) \in L^2(\R,d\mu_\alpha)$.
Then, using that $J_{\alpha}(x)/x^\alpha$ is even and
$J_{\alpha+1}(x)/x^\alpha$ is odd, we write~\eqref{eq:htr} as
\begin{equation*}
  H_{\alpha}f(y)
  = \calF_{\alpha}(f(|\cdot|))(y).
\end{equation*}
The Paley-Wiener space for the Hankel transform is given by
\begin{multline*}
  \quad
  PW'_{\alpha} = \Bigg\{ f\in L^{2}((0,\infty),x^{2\alpha+1}\,dx)
  : f(t) = \int_{0}^{1} u(x) \frac{J_{\alpha}(xt)}{(xt)^\alpha}\,x^{2\alpha+1}\,dx,\\
  \; u\in L^{2}((0,1),x^{2\alpha+1}\,dx) \Bigg\};
  \quad
\end{multline*}
also, note that if $f \in PW'_{\alpha}$, then both the even
extension $f(|\cdot|)$ and the odd extension
$\operatorname{sgn}(\cdot)f(|\cdot|)$ belong to~$PW_{\alpha}$.

So, let us adapt the sampling formula of
subsection~\ref{sub:samp-Dunkl} and the Theorem~\ref{thm:Dunkl} of
subsection~\ref{sub:main-Dunkl} to the context of the Hankel
transform.

%----------------------------------
\subsubsection{The sampling formula for the Hankel transform.}
%----------------------------------

For $f \in PW'_{\alpha}$, taking its even extension $f(|\cdot|)$,
using that $s_{-n} = -s_n$ and grouping the summands corresponding
to $1/(x-s_n)$ and $1/(x+s_n)$ in~\eqref{eq:samp-Dunkl}, we get
\begin{equation*}
  f(x) = f(s_0) \calI_{\alpha+1}(ix)
  + \sum_{n=1}^{\infty} f(s_n)
    \frac{\calI_{\alpha+1}(ix)}{(\alpha+1)\calI_{\alpha}(is_n)}
    \frac{x^2}{x^2-s_n^2}.
\end{equation*}
Similarly, with the odd extension of $f$, \eqref{eq:samp-Dunkl} becomes
\begin{equation*}
  f(x) = \sum_{n=1}^{\infty} f(s_n)
    \frac{\calI_{\alpha+1}(ix)}{(\alpha+1)\calI_{\alpha}(is_n)}
    \frac{s_n^2}{x^2-s_n^2}.
\end{equation*}
The latter identity corresponds to the well-known Higgins sampling theorem for the Hankel transform~\cite{Higgins}.

%----------------------------------
\subsubsection{A version of the Theorem~\ref{thm:Dunkl} for the Hankel transform.}
%----------------------------------

Let us observe that
\begin{equation*}
  \frac{J_{\alpha}(xt)}{(xt)^{\alpha}}
  = \frac{1}{2^{\alpha+1}\Gamma(\alpha+1)}
  \bigl( E_\alpha(ixt) + \overline{E_\alpha(ixt)} \bigr).
\end{equation*}
From this, it is very easy to adapt~\eqref{eq:JA-Dunkl} to the new
context, and to write it in terms of Jacobi polynomials by
using~\eqref{eq:C2n}. Given $f \in PW'_\alpha$, let us consider its
even extension $f(|\cdot|) \in PW_\alpha$.
Applying~\eqref{eq:f-Dunkl} to this even function, it becomes an
expansion that only contains $\calJ_{\alpha+\beta,2n}(x) =
J_{\alpha+\beta+2n+1}(x)/x^{\alpha+\beta+1}$ (i.e., only with even
indexes).

Thus, the results corresponding to the Hankel transforms can be summarized in this way:

\begin{Cor}
Let $\alpha,\beta > -1$ and $\alpha+\beta>-1$. Then for each $x\in (0,\infty)$ 
the following expansion holds in
$L^{2}((0,1),x^{2\alpha+1}\,dx)$:
\begin{equation}
\label{eq:Jxta}
  \frac{J_{\alpha}(xt)}{(xt)^\alpha}
  = \sum_{n = 0}^{\infty}
  \frac{2^{\beta+1}(\alpha+\beta+2n+1) \Gamma(\alpha+\beta+n+1)} {\Gamma(\alpha+n+1)}
  \calJ_{\alpha+\beta,2n}(x) P_{n}^{(\alpha,\beta)}(1-2t^{2}).
\end{equation}
Moreover, for $\beta < 1$ and $f \in PW'_\alpha$, we have the orthogonal expansion
\begin{equation*}
  f(x) = \sum_{n=0}^\infty a_n(f) (\alpha+\beta+2n+1)
  \calJ_{\alpha+\beta,2n}(x)
\end{equation*}
with
\begin{equation*}
  a_n(f) = 2 \int_{0}^{\infty} f(t) \calJ_{\alpha+\beta,2n}(t)
  \,t^{2\alpha+2\beta+1}\,dt.
\end{equation*}
Furthermore, the series converges uniformly in compact subsets of~$(0,\infty)$.
\end{Cor}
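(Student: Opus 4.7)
The strategy is essentially dictated by the paragraphs immediately preceding the corollary: symmetrize the bilinear expansion in Theorem~\ref{thm:Dunkl} using the decomposition
\[
\frac{J_\alpha(xt)}{(xt)^\alpha}
= \frac{1}{2^{\alpha+1}\Gamma(\alpha+1)}\bigl(E_\alpha(ixt) + \overline{E_\alpha(ixt)}\bigr),
\]
which comes from~\eqref{eq:Ea} (only the even part in~$t$ survives), and then rewrite the even-index generalized Gegenbauer polynomials as Jacobi polynomials.

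First I would apply~\eqref{eq:JA-Dunkl} to $E_\alpha(ixt)$ and, since $\calJ_{\alpha+\beta,n}(x)$ and $C_n^{(\beta+1/2,\alpha+1/2)}(t)$ are real, to $\overline{E_\alpha(ixt)}$ (which has $(-i)^n$ in place of $i^n$). Adding, the odd-$n$ terms cancel and even $n=2k$ doubles, with $i^{2k}+(-i)^{2k}=2(-1)^k$. Dividing by $2^{\alpha+1}\Gamma(\alpha+1)$ yields
\[
\frac{J_\alpha(xt)}{(xt)^\alpha}
= \frac{2^{\beta+1}\Gamma(\alpha+\beta+1)}{\Gamma(\alpha+1)}
\sum_{k=0}^\infty (-1)^k(\alpha+\beta+2k+1)\calJ_{\alpha+\beta,2k}(x)
C_{2k}^{(\beta+1/2,\alpha+1/2)}(t).
\]
Substituting~\eqref{eq:C2n} and expressing the ratio $(\alpha+\beta+1)_k/(\alpha+1)_k$ through Gamma functions absorbs the sign $(-1)^k$ and collapses the constants to the claimed form. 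Convergence in $L^2((0,1),x^{2\alpha+1}\,dx)$ follows from the convergence in $L^2([-1,1],d\mu_\alpha)$ of~\eqref{eq:JA-Dunkl}, since the measures on $(0,1)$ agree up to the constant $2^{\alpha+1}\Gamma(\alpha+1)$.

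For the second assertion, given $f\in PW'_\alpha$ I would form the even extension $\widetilde f(x)=f(|x|)$, which belongs to $PW_\alpha$ (as noted in the text), and apply~\eqref{eq:f-Dunkl}--\eqref{eq:cnf-Dunkl} to $\widetilde f$. The key observation is a parity argument: because $\widetilde f$ is even and $\calJ_{\alpha+\beta,n}$ has the parity of $n$, the coefficient $a_n(\widetilde f)$ vanishes whenever $n$ is odd (the integrand in~\eqref{eq:cnf-Dunkl} is then odd against the symmetric measure $d\mu_{\alpha+\beta}$). For $n=2k$ the integrand is even, so
\[
a_{2k}(\widetilde f) = 2\cdot 2^{\alpha+\beta+1}\Gamma(\alpha+\beta+1)
\int_0^\infty f(t)\calJ_{\alpha+\beta,2k}(t)\frac{t^{2\alpha+2\beta+1}}{2^{\alpha+\beta+1}\Gamma(\alpha+\beta+1)}\,dt
= 2\int_0^\infty f(t)\calJ_{\alpha+\beta,2k}(t)\,t^{2\alpha+2\beta+1}\,dt,
\]
which is exactly the formula in the statement. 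Restricting the resulting expansion for $\widetilde f(x)$ to $x\in(0,\infty)$ produces the desired Fourier–Neumann series for~$f$, and the uniform convergence on compact subsets of $(0,\infty)$ is inherited from the uniform convergence on compact subsets of $\R$ asserted in Theorem~\ref{thm:Dunkl}.

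No real obstacle is present beyond bookkeeping: matching the Gamma-function coefficients after substituting~\eqref{eq:C2n} is the only place where a sign or Pochhammer identity could be mis-tracked, and isolating the factor of $2$ arising from folding the integral over $\R$ onto $(0,\infty)$ (together with the factor $2^{\alpha+\beta+1}\Gamma(\alpha+\beta+1)$ hidden in $d\mu_{\alpha+\beta}$) must be done carefully. Everything else is a direct specialization of Theorem~\ref{thm:Dunkl}.
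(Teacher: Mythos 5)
Your proposal is correct and follows essentially the same route the paper takes (the paper only sketches it in the paragraphs preceding the corollary): decompose $J_{\alpha}(xt)/(xt)^{\alpha}$ as $\bigl(E_\alpha(ixt)+\overline{E_\alpha(ixt)}\bigr)/(2^{\alpha+1}\Gamma(\alpha+1))$, apply~\eqref{eq:JA-Dunkl} so that odd terms cancel and~\eqref{eq:C2n} converts the surviving even-index generalized Gegenbauer polynomials into Jacobi polynomials, and then specialize~\eqref{eq:f-Dunkl}--\eqref{eq:cnf-Dunkl} to the even extension of $f$. Your bookkeeping of the constants (the collapse to $2^{\beta+1}$, the absorption of $(-1)^k$, and the factor $2$ from folding the integral) all checks out.
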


\begin{Rem}
\label{rem:referee}
In the particular case $\alpha=-1/2$, on using 
$J_{-1/2}(z) = 2^{1/2} \pi^{-1/2} z^{-1/2} \cos(z)$ 
and~\eqref{eq:C2n}, the formula~\eqref{eq:Jxta} becomes
\begin{equation}
\label{eq:cosxt}
  \frac{x^{\beta+1/2}}{2^{\beta+1/2}\Gamma(\beta+1/2)} \cos(xt) 
  = \sum_{n = 0}^{\infty} (-1)^n (2n+\beta+1/2) J_{\beta+2n+1/2}(x)
  C_{2n}^{\beta+1/2}(t),
\end{equation}
which is already known (see \cite[\S\,11.5, formula~(5), p.~369]{Wat}). 
On the other hand, following the procedure described in Remark~\ref{rem:Rosler}, 
from this expression we can obtain another proof of~\eqref{eq:Jxta}, 
valid for $\alpha > -1/2$. Let us assume that~\eqref{eq:cosxt} is already known,
and we write it with $\alpha+\beta+1/2$ instead of~$\beta$;
then, by applying the intertwining operator $V_\alpha$ and using~\eqref{eq:VaCn}, 
\eqref{eq:C2n} and
\begin{equation*}
  V_\alpha \cos(t) = 2^\alpha \Gamma(\alpha+1) \frac{J_\alpha(t)}{t^\alpha},
\end{equation*}
we get the identity~\eqref{eq:Jxta}, as desired.
\end{Rem}

Let us conclude by observing that the $L^p$ convergence of the
orthogonal series that appear in the previous corollary has been
studied in the papers~\cite{V, CGPV} for functions in an appropriate
$L^p$ extension of the Paley-Wiener space.

%------------------------------
\section{Technical lemmas}
%------------------------------

The main goal of this section is to prove Lemma~\ref{lem:F-PQ},
which is key in our study of the Dunkl
transform on the real line. The proof is contained in 
subsection~\ref{sub:prooflemma}. With this target, we need 
to previously establish some formulas. They are given in 
subsection~\ref{sub:intBessel}.

%------------------------------
\subsection{Some integrals involving Bessel functions}
\label{sub:intBessel}
%------------------------------

For the sake of completeness, let us start by proving two identities
that express some integrals involving the product of two Bessel
functions in terms of Jacobi polynomials. Such integrals
are usually written in terms of hypergeometric functions; 
however their expressions as Jacobi polynomials are not
easily found in the literature. For instance, they do not appear
in the standard references~\cite{E-HTF-II, Wat, E-TIT-II}.
In what follows, we can take into account the comments in 
Remark~\ref{rem:Ntoinfty}, but we will not repeat them.

\begin{Lem}
\label{lem:HTjnab}
For $\alpha,\beta>-1$ with $\alpha+ \beta> -1$,
and $n = 0,1,2,\dots$, let us define
\begin{align*}
  I_{-}(\alpha,\beta,n)(t) &= t^{-\alpha}
    \int_0^\infty x^{-\beta} J_{\alpha+\beta+2n+1}(x) J_\alpha(xt)\, dx, \\
  I_{+}(\alpha,\beta,n)(t) &= t^{-\alpha}
    \int_0^\infty x^{\beta} J_{\alpha+\beta+2n+1}(x) J_\alpha(xt)\, dx.
\end{align*}
Then, we have
\begin{equation}
\label{eq:HTjnab}
  I_{-}(\alpha,\beta,n)(t)
  = 2^{-\beta} \tfrac{\Gamma(n+1)}{\Gamma(\beta+n+1)} (1-t^2)^\beta
  P^{(\alpha,\beta)}_n(1-2t^2) \chi_{[0,1]}(t), \quad t \in (0,\infty).
\end{equation}
Assume further that $\beta<1$; then,
\begin{equation}
\label{eq:HTbjnab}
  I_{+}(\alpha,\beta,n)(t)
  = 2^\beta\, \tfrac{\Gamma(\alpha+\beta+n+1)} {\Gamma(\alpha+n+1)}
  P^{(\alpha,\beta)}_n(1-2t^2), \quad t \in (0,1).
\end{equation}
\end{Lem}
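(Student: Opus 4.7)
The plan is to derive both identities from the Weber--Schafheitlin integral for products of two Bessel functions, which expresses $\int_0^\infty J_\mu(x)J_\nu(xt)\,x^{-\lambda}\,dx$ in terms of a Gauss hypergeometric ${}_2F_1$, and then massage the hypergeometric into a Jacobi polynomial by means of Euler's transformation and the standard representation
\begin{equation*}
P_n^{(\alpha,\beta)}(1-2s)
 = \frac{(\alpha+1)_n}{n!}\,
   {}_2F_1\!\left(-n,\alpha+\beta+n+1;\alpha+1;s\right).
\end{equation*}

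For $I_-$, I would set $\mu=\alpha+\beta+2n+1$, $\nu=\alpha$, $\lambda=\beta$ in the Weber--Schafheitlin formula. After checking the convergence conditions ($\beta>-1$ at the origin, and $2\alpha+2n+2>0$ at infinity, both automatic from $\alpha,\beta>-1$ and $\alpha+\beta>-1$), this yields, for $t\in(0,1)$,
\begin{equation*}
\int_0^\infty x^{-\beta} J_{\alpha+\beta+2n+1}(x) J_\alpha(xt)\,dx
 = \frac{t^\alpha\,\Gamma(\alpha+n+1)}{2^\beta\,\Gamma(\alpha+1)\,\Gamma(\beta+n+1)}\,
   {}_2F_1\!\left(\alpha+n+1,-\beta-n;\alpha+1;t^2\right).
\end{equation*}
An Euler transformation ${}_2F_1(a,b;c;z)=(1-z)^{c-a-b}{}_2F_1(c-a,c-b;c;z)$ converts this into $(1-t^2)^\beta\,{}_2F_1(-n,\alpha+\beta+n+1;\alpha+1;t^2)$, which is the terminating series for the Jacobi polynomial; collecting the gamma prefactors and the $t^{-\alpha}$ outside the integral produces exactly the right-hand side of~\eqref{eq:HTjnab}. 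For $t>1$ I would swap the roles of $\mu$ and $\nu$ in Weber--Schafheitlin: the resulting prefactor then contains $1/\Gamma(-n)=0$, which kills the integral and accounts for the factor $\chi_{[0,1]}(t)$.

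For $I_+$, the argument is essentially the same but with $\lambda=-\beta$. The convergence at infinity now demands $\beta<1$ (this is precisely the hypothesis of the second part), while convergence at $0$ is again automatic from $\alpha+\beta>-1$. Weber--Schafheitlin yields, for $t\in(0,1)$,
\begin{equation*}
\int_0^\infty x^{\beta} J_{\alpha+\beta+2n+1}(x) J_\alpha(xt)\,dx
 = \frac{t^\alpha\,2^\beta\,\Gamma(\alpha+\beta+n+1)}{\Gamma(\alpha+1)\,\Gamma(n+1)}\,
   {}_2F_1\!\left(\alpha+\beta+n+1,-n;\alpha+1;t^2\right),
\end{equation*}
and this time the hypergeometric is already the terminating series representing $P_n^{(\alpha,\beta)}(1-2t^2)$ up to the factor $(\alpha+1)_n/n!$. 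Multiplying by $t^{-\alpha}$ gives~\eqref{eq:HTbjnab} directly.

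The main obstacle is purely bookkeeping: ensuring the convergence conditions of Weber--Schafheitlin are met in both cases (the interplay between $\alpha$, $\beta$, and $n$ at the endpoints), correctly identifying which of the two branches ($t<1$ versus $t>1$) produces the vanishing factor $1/\Gamma(-n)$ in~\eqref{eq:HTjnab}, and matching the numerous gamma quotients and powers of $2$. Everything after the hypergeometric reductions is mechanical, with no further nontrivial special-function identity required beyond Euler's transformation and the Jacobi-polynomial ${}_2F_1$ formula.
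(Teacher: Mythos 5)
Your proposal is correct and follows essentially the same route as the paper: the Weber--Schafheitlin formula with the same parameter choices ($\lambda=\beta$ for $I_-$, $\lambda=-\beta$ for $I_+$), Euler's transformation to reach the terminating ${}_2F_1$, the standard Jacobi-polynomial representation, and the vanishing of the $t>1$ branch via the factor $1/\Gamma(-n)=0$ after exchanging the roles of the two Bessel orders. The only quibble is that you have swapped which endpoint each convergence condition controls (the condition $\lambda<\mu+\nu+1$, i.e.\ $2\alpha+2n+2>0$, governs integrability at the origin, while $\lambda>-1$ governs the behaviour at infinity), but both conditions are correctly identified and verified, so the argument stands.
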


\begin{proof}
We use the formula
\begin{multline}
\label{eq:hforig}
  \int_{0}^{\infty}x^{-\lambda }J_{\mu }(ax)J_{\nu }(bx)\,dx \\
  = \frac{b^{\nu }a^{\lambda -\nu -1}\Gamma(\frac{\mu +\nu -\lambda+1}{2})}
  {2^{\lambda }\Gamma(\nu +1)\Gamma(\frac{\lambda+\mu -\nu +1}{2})}
  {\,{}_{2}F_{1}} \left( \frac{\mu +\nu -\lambda+1}{2},
  \frac{\nu-\lambda-\mu+1}{2}; \nu+1; \frac{b^{2}}{a^{2}}\right),
\end{multline}
valid when $0<b<a$ and $-1<\lambda<\mu+\nu+1$;
here, ${{}_{2}F_{1}}$ denotes the hypergeometric function
(see~\cite[Ch.~8.11, (9), p.~48]{E-TIT-II} or \cite[Ch.~XIII, 13.4
(2), p.~401]{Wat}).

Then, let us start with~\eqref{eq:HTjnab}. Taking $a = 1$ and $t = b$
in~\eqref{eq:hforig}, and making the corresponding changes of variable and
parameters ($\nu = \alpha$, $\mu = \alpha+\beta+2n+1$, $\lambda = \beta$) we get
\begin{equation*}
  I_{-}(\alpha,\beta,n)(t)
  = \frac{\Gamma(\alpha+n+1)} {2^\beta \Gamma(\alpha+1) \Gamma(\beta+n+1)}
  {\,{}_2F_1}(\alpha+n+1,-n-\beta;\alpha+1;t^2),
\end{equation*}
which is valid for $\alpha>-1$ and $\beta>-1$ in the interval $0<t<1$.
Moreover, we have
\begin{equation*}
{\,{}_2F_1}(\alpha+n+1,-n-\beta;\alpha+1;t)
= (1-t)^\beta {\,{}_2F_1}(-n,\alpha+\beta+n+1;\alpha+1;t),
\end{equation*}
where $\alpha,\beta>-1$, $n = 0,1,2,\dots$, and
\begin{equation}
\label{eq:Pnab}
  P_n^{(\alpha,\beta)}(y) = \tfrac{\Gamma(n+\alpha+1)}{\Gamma(\alpha+1)
  \Gamma(n+1)} {\,{}_2F_1}(-n, \alpha+\beta+n+1; \alpha+1; \tfrac{1-y}2),
\end{equation}
whenever $\alpha, \beta> -1$ and $-1 < y < 1$. Therefore,
\begin{equation*}
  I_{-}(\alpha,\beta,n)(t)
  = 2^{-\beta} \, \tfrac{\Gamma(n+1)}{\Gamma(\beta+n+1)} \, (1-t^2)^\beta
  P^{(\alpha,\beta)}_n(1-2t^2), \quad t \in (0,1).
\end{equation*}
Now, we are going to evaluate
$I_{-}(\alpha,\beta,n)(t)$ for $t>1$. To do
that, let us take $t = a$, $b = 1$, $\nu = \alpha+\beta+2n+1$, $\mu = \alpha$, and
$\lambda = \beta$ in~\eqref{eq:hforig}. In this way,
$\frac12(\lambda+\mu-\nu+1) = 0,-1, -2, \dots$, so the coefficient
$1/\Gamma(\frac12(\lambda+\mu-\nu+1))$ vanishes and we get
$I_{-}(\alpha,\beta,n)(t) = 0$.

Finally, let us prove the second part of the lemma. To this end, we take, in~\eqref{eq:hforig},
$a = 1$ and $t = b$, with parameters $\lambda = -\beta$, $\mu
= \alpha+\beta+2n+1$ and $\nu = \alpha$. Then, for $\beta<1$, $\alpha
+\beta>-1$, and $0<t<1$ we get
\begin{equation*}
  I_{+}(\alpha,\beta,n)(t)
  = \frac{2^{\beta} \Gamma(\alpha+\beta+n+1)}{\Gamma(\alpha+1) \Gamma(n+1)} 
  {\,{}_{2}F_{1}}(\alpha+\beta+n+1,-n;\alpha+1;t^{2}).
\end{equation*}
Then, by using~\eqref{eq:Pnab}, \eqref{eq:HTbjnab}~follows.
\end{proof}

%------------------------------
\subsection{Proof of Lemma~\ref{lem:F-PQ}}
\label{sub:prooflemma}
%------------------------------

We start by evaluating $\calF_\alpha (\calJ_{\alpha+\beta,k})(t)$ for $\alpha>-1$ and
$\alpha+\beta>-1$.

By definition,
\begin{equation*}
  \calF_{\alpha}(\calJ_{\alpha+\beta,k})(t)
  = \frac12 \int_{\R} \frac{J_{\alpha+\beta+k+1}(x)}{x^{\alpha+\beta+1}}
  \left( \frac{J_{\alpha}(xt)}{(xt)^{\alpha}}
  - \frac{J_{\alpha+1}(xt)}{(xt)^{\alpha+1}}\,xti\right) |x|^{2\alpha+1}\,dx.
\end{equation*}
For the case $k = 2n$, by decomposing into even and odd functions, we can write
\begin{equation}
\label{eq:tpositive}
  \calF_{\alpha}(\calJ_{\alpha+\beta,2n})(t) =
  \int_{0}^{\infty} \frac{J_{\alpha+\beta+2n+1}(x)}{x^{\alpha+\beta+1}}
  \frac{J_{\alpha}(xt)}{(xt)^{\alpha}}\,x^{2\alpha+1}\,dx.
\end{equation}
Then, for $t>0$, by using~\eqref{eq:HTjnab} in Lemma~\ref{lem:HTjnab},
\eqref{eq:C2n} and~\eqref{eq:hC2n}, it
follows that
\begin{multline*}
  \calF_{\alpha}(\calJ_{\alpha+\beta,2n})(t)
  = t^{-\alpha}
  \int_{0}^{\infty} x^{-\beta} J_{\alpha+\beta+2n+1}(x)J_{\alpha}(xt)\,dx \\
  = \frac{\Gamma(n+1)}{2^{\beta} \Gamma(\beta+n+1)}
  \, (1-t^{2})^{\beta} P_{n}^{(\alpha,\beta)}(1-2t^2) \chi_{[0,1]}(t) \\
  = (-1)^n \, \frac{\Gamma(\alpha+\beta+1) \Gamma(n+1) \Gamma(\alpha+n+1)}{2^{\beta}
  \Gamma(\alpha+1)\Gamma(\beta+n+1)\Gamma(\alpha+\beta+n+1)}
  \, (1-t^{2})^{\beta} C_{2n}^{(\beta+1/2,\alpha+1/2)}(t) \chi_{[0,1]}(t) \\
  = \frac{i^k}{2^{\alpha+\beta+1}\Gamma(\alpha+\beta+1)(\alpha+\beta+k+1)}
  \, \calQ^{(\alpha,\beta)}_{k}(t)\chi_{[0,1]}(t).
\end{multline*}
For $t<0$, let us make, in~\eqref{eq:tpositive}, the
change $t_{1} = -t$, use the evenness of the function
$J_{\alpha}(z)/z^{\alpha}$, % and $J_{\alpha+1}(z)/z^{\alpha+1}$,
proceed as in the case $t>0$, and undo the change. Then, we get
\begin{equation*}
  \calF_{\alpha}(\calJ_{\alpha+\beta,2n})(t) 
  = \frac{i^k}{2^{\alpha+\beta+1}\Gamma(\alpha+\beta+1)(\alpha+\beta+k+1)}
  \, \calQ^{(\alpha,\beta)}_{k}(t)\chi_{[-1,0]}(t).
\end{equation*}
Thus, \eqref{eq:F-Q} for even $k$ is proved. The case $k=2n+1$ is completely similar.

Proceeding in the same way, the formula~\eqref{eq:F-P} follows from~\eqref{eq:HTbjnab}.

\subsection*{Acknowledgement}
The authors whish to thank the referee for careful reading of the manuscript, 
comments and the suggestion of Remark~\ref{rem:referee}.

%------------------------------

%------------------------------
\end{document}